\documentclass[12pt,letterpaper]{amsart}
\usepackage{amsmath, amsthm, amssymb}
\usepackage[tmargin=1.4in,bmargin=1.2in,rmargin=1.4in,lmargin=1.4in]{geometry}
\usepackage[breaklinks=true]{hyperref}

\usepackage{soul} 

\usepackage{changes} 

\theoremstyle{plain}
\newtheorem{theorem}{Theorem}[section]
\newtheorem{lemma}{Lemma}[section]

\theoremstyle{definition}
\newtheorem{defin}{Definition}[section]

\DeclareMathOperator{\Aut}{\ensuremath{Aut}}

\DeclareMathOperator{\ad}{\ensuremath{ad}}
\DeclareMathOperator{\rad}{\ensuremath{rad}}

\begin{document}
\title{Center and derivations of generalized Weyl algebras over $\mathbb{Z}/p^n\mathbb{Z}$}
\author{Ruben Mamani-Velasco, Akaki Tikaradze}
\email{RubenLimbert.MamaniVelasco@rockets.utoledo.edu, Akaki.Tikaradze@utoledo.edu }
\address{University of Toledo, Department of Mathematics \& Statistics, 
Toledo, OH 43606, USA}

\begin{abstract}
Let $A$ be either a classical generalized Weyl algebra (also known as a noncommutative deformation of type A Kleinian singularity) or
the enveloping algebra $U(\mathfrak{sl}_{2})$ over $\mathbb{Z}/p^n\mathbb{Z}.$ In this paper we compute the center and derivations of $A.$ More specifically,
we show that the center of $U(\mathfrak{sl}_2)$ is generated by the Casimir element over the ring of the Witt vectors (of length $n$) of its $p$-center.
Our description of derivations of $A$ implies that if the ground ring is a field $\bold{k}$ of characteristic $p>2,$  then
the restriction homomorphism $HH^1_{\bold{k}}(A)\to Der_{\bold{k}}(Z(A), Z(A))$ from the first Hochschild cohomology of $A$ to $\bold{k}$-derivations
of the center is an isomorphism. 
\end{abstract}
\maketitle

\section{Introduction and main results}

We start by recalling the definition of generalized Weyl algebras.

\begin{defin}
Let $R$ be a ring, let $\phi\in \Aut(R)$ and $z\in R$ be a central
element which is not a zero-divisor. Then the corresponding generalized
Weyl algebra $H(R, \phi, z)$ is the algebra generated over $R$ by
generators $x, y$ subject to the following relations:
\[
yx=z, \quad xy=\phi^{-1}(z), \quad xr=\phi^{-1}(r)x, \quad
yr=\phi(r)y, \quad r\in R.
\]
\end{defin} 

Generalized Weyl algebras were introduced by Bavula \cite{B} and by
Lunts--Rosenberg \cite{LR} (under the name of hyperbolic algebras). They
incorporate many important classes of noncommutative algebras, such as
quantized Weyl algebras, the enveloping algebra $U(\frak{sl}_2)$, and
noncommutative deformations of type A Kleinian
singularities of Hodges \cite{H} (which are also called the classical generalized Weyl algebras). 

Derivations (more generally the Hochschild cohomology) of various classes of generalized Weyl algebras have been extensively studied in the literature, but mostly over a ground field of characteristic 0 (see for example \cite{K}, \cite{FSS}, \cite{X}.)
The goal of this paper is to study the center and derivations of these algebras over $\mathbb{Z}/p^n\mathbb{Z}.$

 The description of the center of the enveloping algebra of $\mathfrak{sl}_2$ over a field of positive characteristic is well-known: a classical result going back to Rudakov and Shafarevich \cite{RS}
 showed that it is generated by the Casimir element over the so-called $p$-center. We extend this result over $\mathbb{Z}/p^n\mathbb{Z}.$  Namely, we show that the center is generated by the Casimir element over a subring (which one might call its $p^n$-center) which is isomorphic to the ring of Witt vectors of length $n-1$ over
 the $p$-center, Theorem \ref{center-sl2}. We also prove a similar statement for classical generalized Weyl algebras.

 Next, we turn to derivations of these algebras. At first, recall that these algebras have no outer derivations over a field of characteristic 0.
 The situation is quite different over a field $\bold{k}$ of positive characteristic. Specifically, we give a full description of $HH^1_{\bold{k}}(A)$ (the space of outer derivations of $A$) as a finitely generated module over 
 the center, see Theorem \ref{derivations USL2}.
 As a corollary, we show that the restriction map $HH^1_{\bold{k}}(A)\to Der_{\bold{k}}(Z(A))$  is an isomorphism when $\bold{k}$  is a field. This result is interesting taking into account that
 it is a well-known property of an Azumaya algebra $A$ over a $\bold{k}$-algebra $Z$ that the restriction homomorphism $HH^1_{\bold{k}}(A)\to Der_{\bold{k}}(Z)$  is an isomorphism. However,
 neither $U(\mathfrak{sl}_2)$ nor classical generalized Weyl algebras (for non-generic parameters) are Azumaya algebras.

\section{Center and derivations of $U(\frak{sl}_2)$ over $\mathbb{Z}/p^n\mathbb{Z}.$}
We fix once and for all a perfect field $\bold{k}$  of characteristic $p>2.$
Let $W(\bold{k})$ (respectively $W_n(\bold{k})$)  denote the ring of Witt vectors over $\bold{k}$ (resp. the ring of Witt vectors of length $n$).
Recall that $W_n(\bold{k})$  is uniquely determined up to an isomorphism as a flat commutative $\mathbb{Z}/p^n\mathbb{Z}$-algebra $R$ such that
$R/pR\cong \bold{k}.$ Throughout the paper, we put for simplicity $S=W(\bold{k}), S_n=W_n(\bold{k}).$  Next, we recall some known constructions relating centers and derivations of algebras over $\mathbb{Z}/p^n\mathbb{Z}$ and rings of Witt vectors.

Let $A$ be an $S$-algebra, free as an $S$-module. Put $A_n=A/p^nA$ for $n\in\mathbb{N}$ and $Z_n=Z(A_n)$.
Then one has a ring homomorphism
$$\phi_n:W_n(Z_1)\to Z_n,$$
defined as follows:
$$\phi_n(z_0, z_1,\cdots, z_{n-1})=\sum p^i\tilde{z_i}^{p^{n-i-1}},$$
where $\tilde{z}$  denotes any lift of $z$ to $A_n,$  see \cite{SV} for details.
It is known that the homomorphisms $\phi_n$  are isomorphisms if $A$ is the $m$-th Weyl algebra over $S.$ More
generally, as proved in \cite{SV}, $\phi_n, n\geq 1$ are isomorphisms if $A$ is the ring of differential operators over a smooth affine scheme $X$ over $S.$ 

Next, recall that if $R$ is a commutative ring and $B$ is an $R$-algebra, then  $HH^1_R(B)$  denotes the quotient of $R$-linear derivations of $B$ by the inner ones -- the first Hochschild cohomology of $B$ over $R.$ There is a natural $Z(B)$-module structure on $HH^1_R(B)$. We define the following $S_n$-derivation 
$$d_n:Z_n\to HH^1_{S_n}(A_n),\quad n\geq 1$$  as follows.
Let $z\in Z_n$ and $z'\in A$ be any lift of $z$. Then $\ad(z')=[z', -]$  is a derivation
of $A$ whose image is contained in $p^nA.$  Thus we get a derivation $\frac{\ad(z')}{p^n}:A\to A,$ which descends
to a derivation of $A_n,$ and the image of this derivation in $HH^1_{S_n}(A_n)$  is defined to be $d_n(z).$ One checks easily that
$d_n(z)$  is independent of the choice of a lift $z'.$

Put $A=U(\mathfrak{sl}_2(S)).$  In order to describe our result about the center of $A_n$, we first recall the classical case when
$n=1$ (so $S_1=\bold{k}).$ In this case, the description of the center goes back to Rudakov and Shafarevich \cite{RS}. They showed that the center is generated over
the polynomial subring $\bold{k}[e^p, f^p, h^p-h]=Z_p(\bold{k})$   (the so-called $p$-center) by (a shifted) Casimir element $\Delta=4ef+(h-1)^2$,
subject to the relation 
$$e^pf^p+\frac{1}{4}(h^p-h)^2=\frac{1}{4}\Delta(\Delta^{\frac{p-1}{2}}-1)^2.$$ 
In particular, the center is a free $Z_p(\bold{k})$-module of rank $p$ with basis $1,\Delta,\cdots, \Delta^{p-1}.$

Now, let $n\geq 2.$ Denote by $Z_{p}(S_n)$  the range of the injective homomorphism $\phi_n:W_n(Z_p(\bold{k}))\to Z_n$ defined above.
So, $Z_p(S_n)$ is isomorphic to $W_n(Z_p(\bold{k}))$  and is generated as a subring of $Z_n$ 
over $W_n(\bold{k})$  by the following elements
$$p^{n-i-1}(e^p)^{p^i},\quad p^{n-i-1}(f^p)^{p^i},\quad p^{n-i-1}(h^p-h)^{p^i},\qquad 0\leq i<n.$$
Now we state our result describing the center of $A_n, n\geq 1$ (recall that $A=U(\mathfrak{sl}_2(S)).$ 

\begin{theorem}\label{center-sl2}
 The ring $Z_n$ is generated over $Z_{p}(S_n)$ by $\Delta$ subject to the following relations
 $$p^{n-i-1}(\frac{1}{4}\Delta(\Delta^{\frac{p-1}{2}}-1)^2)^{p^i}=p^{n-i-1}(e^pf^p+\frac{1}{4}(h^p-h)^2)^{p^i}, \quad 0\leq i <n.$$
 In particular, $Z_n$ is generated by $\Delta^i, 0\leq i< p^{n}$ as a $Z_p(S_n)$-module. 
\end{theorem}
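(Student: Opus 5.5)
Plan: induct on $n$, using the exact sequence $0\to A_{n-1}\xrightarrow{p} A_n\to A_1\to 0$ (multiplication by $p$ is injective because $A$ is $S$-free with PBW basis). Intersecting with centers gives
\[
0\to Z_{n-1}\xrightarrow{p} Z_n\to Z_1 .
\]
Denote by $\pi\colon Z_n\to Z_1$ the reduction map. Its image is the set of central elements of $A_1$ that lift to central elements of $A_n$. So the theorem splits into two parts: determining $\pi(Z_n)$, and checking that the generators and relations assemble correctly along this sequence.

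\textbf{Step 1 (image of reduction).} By Rudakov--Shafarevich, $Z_1=Z_p(\bold{k})[\Delta]$. First I claim $\pi(Z_n)=\bold{k}[\Delta]\cdot Z_1^{p^{n-1}}$, i.e.\ it is generated by $\Delta$ and the $p^{n-1}$-th powers of $Z_p(\bold{k})$-elements. The inclusion $\supseteq$ is clear: $\Delta$ is central over $S$, and $\tilde z^{p^{n-1}}$ is central in $A_n$ for any lift $\tilde z$ of a central $z$ (this is the content of $\phi_n$).

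For $\subseteq$, use the derivation $d_1\colon Z_1\to HH^1_{\bold{k}}(A_1)$. An element $z\in Z_1$ lifts to $Z_2$ iff $d_1(z)=0$. Write $z=\sum_{j<p} c_j\Delta^j$ with $c_j\in Z_p(\bold{k})$. Then $d_1(z)=\sum_j \Delta^j d_1(c_j)$, since $d_1(\Delta)=0$. The key computation is that $d_1(e^p)$, $d_1(f^p)$, $d_1(h^p-h)$ are $Z_1$-linearly independent in $HH^1$, outside the locus where things degenerate. I would verify this by evaluating these derivations on $e,f,h$: $\ad(e^p)/p$ applied to $f$ gives a nonzero explicit expression involving $e^{p-1}h$ modulo $p$, and similarly for the others. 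I would then show that such combinations are not inner by checking the degrees (grading by $h$-weight). This forces every $c_j\in \bold{k}[e^{p^2},f^{p^2},(h^p-h)^p]=Z_p(\bold{k})^p$, since $\bold{k}$ is perfect. For higher $n$ I iterate: lifting from $Z_{m}$ to $Z_{m+1}$ is obstructed by $d_m$. Applying the same independence to the leading $p$-adic piece gives inductively $c_j\in Z_p(\bold{k})^{p^{n-1}}$. To make the iteration work, I would use the formula for $d_m$ of a $p^{m-1}$-th power: $d_m(\tilde c^{p^{m-1}})\equiv \tilde c^{p^{m-1}-1}d_1(c)$ up to units, which is a standard Witt-vector computation.

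\textbf{Step 2 (assembly).} By induction $Z_{n-1}$ is generated over $Z_p(S_{n-1})$ by $\Delta$. Then $pZ_{n-1}$ is generated by $p\cdot(\ldots)$, which lies in the subring $Z_p(S_n)[\Delta]$ because $p\,\phi_{n-1}(w)=\phi_n(Vw)$ (Verschiebung). Together with Step 1, every element of $Z_n$ lies in $Z_p(S_n)[\Delta]$. The stated relations hold in $A_n$ because they are $p^{n-i-1}$ times $p^i$-th powers of the $n=1$ relation, which holds modulo $p$, and $a\equiv b \bmod p$ implies $a^{p^i}\equiv b^{p^i}\bmod p^{i+1}$.

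To show these relations suffice, I compare the presentation ring $P_n$ with $Z_n$ via the same exact sequence for $P_n$ ($0\to P_{n-1}\to P_n\to \pi(Z_n)$) and apply the five lemma, using the $n=1$ Rudakov--Shafarevich presentation. The module statement follows from the relation with $i=n-1$, which is monic in $\Delta$ of degree $p^{n}$.

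\textbf{Main obstacle.} I expect the main difficulty to be Step 1: proving that $d_1(e^p)$, $d_1(f^p)$, $d_1(h^p-h)$ are independent modulo inner derivations over $Z_1$, and carrying this through the higher obstruction maps $d_m$. I would first attempt this by restricting to a localization $Z_1[e^{-p}]$. There $A_1$ becomes Azumaya-like, $HH^1$ identifies with $Der(Z)$, and $d_1$ is the de Rham differential. Independence then reduces to the $p$-basis property of $e^p$, $f^p$, $h^p-h$ in the smooth locus.
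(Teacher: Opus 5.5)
\textbf{There is a genuine gap: the key lemma in Step 1 is false.}

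The classes $d_1(e^p), d_1(f^p), d_1(h^p-h)$ are not $Z_1$-linearly independent in $HH^1_{\mathbf{k}}(A_1)$, not even generically. Since $d_1$ is a derivation with $d_1(\Delta)=0$, applying it to the Rudakov--Shafarevich relation gives
\[
f^p\,d_1(e^p)+e^p\,d_1(f^p)+\tfrac{1}{2}(h^p-h)\,d_1(h^p-h)=0 .
\]
This is a homogeneous degree-zero relation with nonzero coefficients in a domain. Neither the grading nor localizing at $e^p$ can remove it.

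The conclusion you draw from the lemma is accordingly false. $\Delta^p$ lifts to every $Z_n$, since it is central over $S$. In the basis $1,\Delta,\dots,\Delta^{p-1}$ it reads $\Delta^p=\bigl(4e^pf^p+(h^p-h)^2\bigr)-\Delta+2\Delta^{\frac{p+1}{2}}$, and its constant coefficient is not in $Z_p(\mathbf{k})^p$. Your conclusion would also confine $\pi(Z_2)$ to a free $Z_p(\mathbf{k})^p$-module of rank $p$. But the subring $Z_p(\mathbf{k})^p[\Delta]\subseteq \pi(Z_2)$ that you yourself exhibited has rank $p^2$.

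The Azumaya heuristic fails for the same reason. On that locus, $d_1$ followed by restriction to the center is $z\mapsto\{z,-\}$, where $\{a,b\}=\frac{1}{p}[\tilde a,\tilde b]\bmod p$ is the induced Poisson bracket on $Z_1$. This bracket has rank at most two on a three-dimensional variety and has $\Delta$ as a Casimir. So the map is not the de Rham differential, and its kernel is the Poisson center, not something governed by a $p$-basis. The real content of the theorem, that exactly $Z_p(\mathbf{k})^{p^{n-1}}[\Delta]$ lifts to $Z_n$, therefore remains unproved. To rescue your route you would have to compute $\ker d_1$, and then $\ker d_m$, taking this syzygy into account. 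That is no easier than computing the center directly.

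Step 2 also hides a nontrivial input. Exactness of $0\to P_{n-1}\to P_n\to\pi(Z_n)$ for the abstractly presented ring is not automatic. After using the monic $i=n-1$ relation to reduce to $\Delta$-degree $<p^n$, you need $1,\Delta,\dots,\Delta^{p^n-1}$ to be linearly independent over $Z_p(\mathbf{k})^{p^{n-1}}$. You have to prove this; it is exactly what the paper checks via the associated graded.

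For comparison, the paper avoids obstruction theory entirely and uses the $\mathbb{Z}$-grading of the generalized Weyl algebra:
\begin{enumerate}
\item The leading-coefficient condition $2m\alpha_m=0$ yields $Z_n^0=\sum_i p^iS_n[\Delta,(h^p-h)^{p^{n-i-1}}]$.
\item A homogeneous element $rf^m$ is central iff $r\in Z_n^0$ and $mr=0$.
\item An induction on $h$-degree places all such elements in $Z_p(S_n)[\Delta]$.
\end{enumerate}
Your other ingredients are fine once the image $\pi(Z_n)$ is correctly established: the exact sequence $0\to Z_{n-1}\to Z_n\to Z_1$, the lifting criterion via $d_1$, the inclusion $\supseteq$, the generation argument via $p\,\phi_{n-1}=\phi_n\circ V$, and the check that the stated relations hold.
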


Next, we state our description of derivations of  $A_n.$ We set $H=h-1.$ 

\begin{theorem}\label{derivations USL2} 
$HH^1_{S_n}(A_n)$  is generated as a $Z_n$-module by $d_n(Z_n)$ 
and by derivations $D_i, \hat{D}_i,\tilde{D}_m,\bar{D}_m$ where $0\leq i\leq n-1, 1\leq m\leq p^{n-1}$ defined below:

\begin{align*}
    &D_i(f)=0, \;\;\; D_i(h)=p^i(H^p-H)^{p^{n-i-1}},\\
    &D_i(e)f=\frac{1}{2}\left[p^i\Delta^{\frac{p^{n-i-1}+1}{2}}(\Delta^{\frac{p-1}{2}}-1)^{p^{n-i-1}}-p^i(H^p-H)^{p^{n-i-1}}H\right],
\end{align*}

\begin{align*}
    &\hat{D}_i(f)=0, \;\;\; \hat{D}_i(h)=p^i(\Delta^{\frac{p-1}{2}}-1)^{p^{n-i-1}},\\
    &\hat{D}_i(e)f=\frac{1}{2}\left[p^i(H^p-H)^{p^{n-i-1}}-p^i\Delta^{\frac{{p^{n-i-1}}-1}{2}}(\Delta^{\frac{p-1}{2}}-1)^{p^{n-i-1}}H\right],
\end{align*}

\[
\tilde{D}_m(f)=\tilde{D}_m(h)=0,\;\; \tilde{D}_m(e)=f^{mp-1},
\]
and
\[
\bar{D}_m(e)=\bar{D}_m(h)=0,\;\; \bar{D}_m(f)=e^{mp-1}.
\]

\end{theorem}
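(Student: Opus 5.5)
We argue by induction on $n$. The base case $n=1$ is the computation of $HH^1_{\bold{k}}(U(\mathfrak{sl}_2(\bold{k})))$, and the inductive step uses the short exact sequence
$$0\to A_{n-1}\xrightarrow{\,p\,} A_n\to A_1\to 0$$
of $A_n$-bimodules, which comes from the freeness of $A$ over $S$. Let $D$ be an $S_n$-derivation of $A_n$. A derivation of $A_n$ is determined by the triple $(D(e),D(f),D(h))$, subject to the three $\mathfrak{sl}_2$ relations. First we use inner derivations to normalize $D$ so that $D(f)$ takes a special form. Since $\mathrm{ad}(h)$ acts semisimply with weights in $2\mathbb{Z}$, we decompose $D$ into $\mathrm{ad}(h)$-weight components. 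Modulo inner derivations we may then assume $D$ has weight $0$ after adjusting by $\mathrm{ad}$ of elements of nonzero weight $2k$ with $2k$ invertible. The weights $2k$ with $p\mid k$ are the exceptions, and these produce the $\tilde D_m,\bar D_m$ (raising or lowering by $mp$). So $D(h)$ commutes with $h$, i.e.\ lies in $A^0_n=S_n[h,ef]$, modulo the exceptional components.

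For $n=1$ we do the computation explicitly. A weight-zero derivation satisfies $D(h)\in \bold{k}[h,\Delta]$. The relation $[h,e]=2e$ forces $[D(h),e]=0$ after subtracting $\mathrm{ad}$ terms, so $D(h)$ must be invariant under $h\mapsto h-2$. Modulo $p$, this makes $D(h)$ a polynomial in $H^p-H$ and $\Delta$ (note $h\mapsto h-2$ generates translations by $\mathbb{F}_p$, and $p>2$). Next, $[e,f]=h$ gives $D(e)f+eD(f)=D(h)$ plus commutators. After using $\mathrm{ad}(\text{weight-}0)$ to kill $D(f)$, this determines $D(e)f$, and hence $D(e)$. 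The constraint is that this value must lie in $Af$, i.e.\ must vanish on the appropriate locus. Using the Rudakov--Shafarevich relation, this constraint carves out exactly the $Z_1$-span of $d_1(Z_1)$, $D_0$, $\hat D_0$, and the $\tilde D_m,\bar D_m$ for $m\ge1$ (those with $m=p^{0}$ suffice together with the $Z_1$-module structure). Here we check that $d_1(e^p)$, $d_1(f^p)$, $d_1(h^p-h)$ and $d_1(\Delta)$ account for the remaining weight-zero derivations. The formulas for $D_0,\hat D_0$ come from the factorization of $\frac14\Delta(\Delta^{(p-1)/2}-1)^2-\frac14(H^p-H)^2$: it is divisible by $ef$, the two "square roots" $H^p-H$ and $\Delta^{1/2}(\Delta^{(p-1)/2}-1)$ pair up, and this yields $D(e)f$ divisible by $f$.

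For the inductive step, take $D$ on $A_n$. Its reduction mod $p$ is, by the base case, a $Z_1$-combination of the listed generators modulo inner derivations. We need to lift each generator. The $D_i,\hat D_i$ with $i\ge1$ and the $\tilde D_m,\bar D_m$ with $m\le p^{n-1}$ are defined directly on $A_n$, via the $p^{i}$-power formulas and the relation of Theorem \ref{center-sl2} raised to $p^{n-i-1}$. For $d_1(z)$ the lift is $d_n(\phi_n(\ldots))$, using that $d_n$ of the Witt-lifted element $\tilde z^{p^{n-1}}$ reduces to $d_1(z)$ up to a unit. After subtracting a lift, $D$ reduces to an inner derivation mod $p$. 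Subtracting a lift of that inner derivation, $D$ has image in $pA_n\cong A_{n-1}$, so $D=pD'$ with $D'$ a derivation of $A_{n-1}$. By induction $D'$ is in the span of the generators for $n-1$. Multiplying by $p$ sends $D_i^{(n-1)}$ to $D_{i+1}^{(n)}$ (and similarly for $\hat D$), sends $d_{n-1}$ to $d_n$ composed with $\phi$-type maps, and sends $\tilde D_m$ to $p\tilde D_m$. This closes the induction, with Theorem \ref{center-sl2} used to identify $Z_n$-coefficients.

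The main obstacle is the base case: proving that the constraint "$D(e)f$ lies in $Af$ and is consistent with $[h,e]=2e$" yields exactly the stated span. This amounts to computing $\ker$ of multiplication in the free $Z_p$-module $Z_1$ with the Rudakov--Shafarevich relation, and we would first attempt it after localizing at $ef\ne0$ and then controlling denominators. A secondary difficulty is the compatibility $p\cdot d_{n-1}\leftrightarrow d_n$ in the inductive step, which requires $\phi_n$ and its images $p^{n-i-1}z^{p^i}$.
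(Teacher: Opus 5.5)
There is a genuine gap in your inductive step, at ``We need to lift each generator.'' You write the reduction $\bar D$ of $D$ as a $Z_1$-combination of $d_1(Z_1)$, $D_0^{(1)}$, $\hat D_0^{(1)}$, $\tilde D_1$, $\bar D_1$ modulo inner derivations. But neither the coefficients nor the generators can be lifted term by term.

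For the coefficients: by Theorem \ref{center-sl2}, the image of the reduction $Z_n\to Z_1$ is $Z_p(\mathbf{k})^{p^{n-1}}[\Delta]$. For $n\ge 2$ this misses $e^p$, $f^p$ and $h^p-h$.

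For the generators: $D_0^{(1)}$ is not the reduction of any derivation of $A_n$, even modulo inner ones. Take a degree-$0$ derivation $D$ of $A_n$ with $D(f)=rf$. Applying $D$ to $[h,f]=-2f$, and using that $A_n^0$ is commutative, gives $[D(h),f]=0$. Hence $D(h)\in Z_n^0=\sum_i p^iS_n[\Delta,(h^p-h)^{p^{n-i-1}}]$, whose reduction mod $p$ is $\mathbf{k}[\Delta,(h^p-h)^{p^{n-1}}]$. Reduction respects the grading, and degree-$0$ inner derivations kill $h$, so this rules out $D_0^{(1)}(h)=h^p-h$.

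The level-$n$ generators you propose to use do not fill this gap. $D_i$ and $\hat D_i$ with $i\ge 1$ vanish mod $p$. $D_0^{(n)}$ reduces to $(h^p-h)^{p^{n-1}-1}D_0^{(1)}$ plus a derivation killing $f$ and $h$. So the real content of your inductive step is to determine the image of $HH^1_{S_n}(A_n)\to HH^1_{\mathbf{k}}(A_1)$. That means finding which $Z_1$-combinations of level-one classes lift, and showing they are spanned by reductions of level-$n$ generators with $Z_n$-coefficients. Nothing in the proposal addresses this, and it is where all the $p$-adic difficulty lives.

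The same issue recurs when you multiply by $p$. For $c'\in Z_{n-1}$, $p\cdot(c'D_0^{(n-1)})=\tilde c'D_1^{(n)}$, where $\tilde c'$ is a non-central lift of $c'$. Since $D_0^{(n-1)}$ has no lift to $A_n$, you cannot rewrite this as $(p\tilde c')$ times a derivation of $A_n$. So it is not a priori in the $Z_n$-span of the listed generators.

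There are also some smaller slips:
\begin{itemize}
\item $d_n(\tilde z^{p^{n-1}})$ reduces to $z^{p^{n-1}-1}d_1(z)$, not to a unit multiple of $d_1(z)$. The correct lift is $d_n(p^{n-1}\tilde z)=\frac1p\ad(\tilde z)$.
\item In the base case, $D(f)$ cannot be killed by $\ad$ of degree-$0$ elements, since $h^{p-1}f\notin[A_1^0,f]$. So $d_1(h^p-h)$ is needed already there.
\item Applying $\tilde D_m$ to $[h,e]=2e$ forces $2mp\,f^{mp-1}=0$. So $\tilde D_m$ is a derivation of $A_n$ only when $p^{n-1}\mid m$, and the claim that it is ``defined directly on $A_n$'' needs $p$-power normalizations.
\end{itemize}

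For comparison, the paper never reduces to $A_1$; it works in $A_n$ throughout:
\begin{itemize}
\item It splits $D$ by degree and disposes of degrees prime to $p$ via $2mD=[D(h),-]$.
\item It normalizes $D(f)=0$ using inner derivations and $d_n$-images, via the localized Weyl algebra lemma.
\item It then constrains $D(h)$ and $D(e)$ through the explicit description of $Z_n^0$ and of $[A_n^{-1},f]\cap Z_n^0$ and $A_n^{-1}f\cap Z_n^0$ inside $S_n[\Delta,h]$.
\end{itemize}
Induction on $n$ enters only in those commutative computations, where the $p$-adic structure of $Z_n^0$ is written down explicitly. That is exactly the information your lifting step is missing.
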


Recall that $A$ (and its reductions $\mod p^n$) can be identified with a generalized Weyl algebra $H(R,\phi,z)$, where $R=S[\Delta, h]$, $x=f, y=e, 
z=ef=\frac{1}{4}(\Delta-(h-1)^2)$ and the automorphism $\phi$ is given as follows: $\phi(h)=h-2, \phi(\Delta)=\Delta.$ Throughout, we are using the standard grading on a generalized Weyl algebra: 
$$\deg(f)=1,\quad  \deg(e)=-1,\quad \deg(r)=0,\quad r\in R.$$

To prove Theorem \ref{center-sl2}, we start by describing central elements of degree 0. Denote by $A^l_n$ the submodule of degree $l$ elements of $A_n.$ Put $Z^l_n=Z_n\cap A_n^l.$ Let $r=\sum_{i=0}^m \alpha_i(\Delta) h^i\in Z_n^0=(A_n^0)^{\phi}$. So
\[
0=\phi^{-1}(r)-r=\sum_{i=0}^m \alpha_i [(h+2)^i-h^i] =\sum_{i=0}^m\alpha_i \sum_{j=0}^{i-1}\binom{i}{j} 2^{i-j}h^j.
\]
It follows that $2m\alpha_m=0$. Thus $p^i|\alpha_m$ and $p^{n-i}|m$ for $i\in\{0,\ldots,n-1\}$ assuming $\alpha_m\neq 0$. Let $\alpha_m=p^i\beta_m$ and $m=lp^{n-i}$ then,  $r-\beta_mp^i(h^p-h)^{lp^{n-i-1}}$ is a central element of $h$-degree less than $m$. Iterating this argument, we obtain
\begin{equation}\label{eq: fix by phi}
   Z_n^0=\sum_{i=0}^{n-1}p^i S_n[\Delta,(h^p-h)^{p^{n-i-1}}]=\sum_{i=0}^{n-1}p^{n-i-1} S_n[\Delta,(h^p-h)^{p^{i}}].
\end{equation}
In particular, for any $j\in\{0,...,n-1\}$:
\begin{equation}\label{eq: p^jR}
    p^jZ_n^0=\sum_{i=j}^{n-1}p^i S_n[\Delta,(h^p-h)^{p^{n-i-1}}]=\sum_{i=j}^{n-1}p^{j+n-i-1} S_n[\Delta,(h^p-h)^{p^{i-j}}].
\end{equation}

Now, we consider homogeneous central elements of positive degree. Let $rf^m\in Z_n^m$, where $r\neq 0\in A_n^0$. Then $[f,rf^m]=0$ implies $r\in Z_n^0$. On the other hand, for any $s\in A_n^0$, we have $[s,rf^m]=0$ which implies $r(\phi^{-m}(s)-s)=0$. In particular $0=r(\phi^{-m}(h)-h)=2rm$. Since $(\phi^{-m}(s)-s)\in 2mA_n^0$, $rm=0$ also implies $r(\phi^{-m}(s)-s)=0$. Hence $p^j|r$ and $p^{n-j}|m$ for all $j\in\{0,...,n-1\}$. 

It follows from equation \ref{eq: p^jR} that central elements of positive degree $m$ such that $p^{n-j}|m$, where $j\in\{0,...,n-1\}$ and $p^{n-j+1}$ does not divide $m$ unless $j=0$, can be expressed as
\begin{equation}\label{eq: pos central elm}
    \sum_{i=j}^{n-1}p^i S_n[\Delta,(h^p-h)^{p^{n-i-1}}]f^{m}=\sum_{i=j}^{n-1}p^{j+n-i-1} S_n[\Delta,(h^p-h)^{p^{i-j}}]f^{m}.
\end{equation}

The same argument works for homogeneous elements of negative degree.

Clearly, any element of the form $\alpha(\Delta)p^i(h^p-h)^{p^{n-i-1}}$ is in $Z_p(S_n)[\Delta]$ by taking the Witt vector $(h^p-h)$ on the $i$-th entry and 0 elsewhere. Hence $Z_n^0\subset Z_p(S_n)[\Delta]$.

Now, consider an element of $Z_n^m, m>0$ and let $j\geq0$ be such that $p^j$ is the highest power of $p$ that divides $m$. From equation \eqref{eq: pos central elm}, for fixed $i\geq0$ we want to show $z=p^l(h^p-h)^{tp^i}f^m\in Z_p(S_n)$ for any $l,t\geq 0$ such that $l\leq n-1$, $i+l\geq n-1$, $j+l\geq n$ and $p\nmid t$.


We are going to apply induction on $i$. If $i=0$, then $l=n-1$ and $p|m$; consequently $p^{n-1}(h^p-h)^tf^m$ is clearly in $Z_p(S_n)$. Suppose it holds for any integer in $[0,i)$; we will prove it for $i$. Note that the element $\hat{z}=p^l\left((h^p-h)^{tp^{i-(n-l-1)}}f^{\frac{m}{p^{n-l-1}}}\right)^{p^{n-l-1}}$ belongs to $Z_p(S_n)$, and more importantly note that $\hat{z}-z\in Z_n^m$ such that the highest power of $h$ is smaller than $tp^{i+1}$ (highest power of $h$ in $z$ or $\hat{z}$). Hence by induction $\hat{z}-z\in Z_p(S_n)$ and we conclude that $Z_n$ is generated by
$\Delta$ over  $Z_p(S_n).$ Thus, to conclude the proof of Theorem \ref{center-sl2}, we need to verify that the relations given in the theorem are generating relations
of $\Delta$ over $Z_p(S_n).$ As before, we proceed by induction on $n.$

For $n=1$, the following equality is well-known \cite{RS}

\begin{equation}\label{eq:e^pf^p}
    e^pf^p=\frac{1}{4}\left(\Delta(\Delta^{\frac{p-1}{2}}-1)^2-(h^p-h)^2\right).
\end{equation}
Therefore, the desired relations hold for all $n\geq 1.$ It remains to show that they are in fact generating relations. To do this, it suffices to show
that the elements $1, \Delta,\cdots, \Delta^{p^{n}-1}$  are linearly independent in $U(\mathfrak{sl}_2(\bold{k}))$  over  $Z_{p}(\bold{k})^{p^{n-1}}.$ 
This easily follows from the corresponding statement on the level of the associated graded algebra of $U(\mathfrak{sl}_2(\bold{k})).$ 
This concludes the proof of Theorem \ref{center-sl2}.\qed\bigskip

Now we prove Theorem \ref{derivations USL2}. First, note that given a derivation $D$ of $A_n$ of degree $m\in \mathbb{Z}$, we have
\[
2mD(e)=[D(h),e],\; 2mD(f)=[D(h),f],\quad 2mD(h)=[D(h),h].
\]
Thus, if $p$ does not divide $m,$ then $D$ is inner. So, without loss of generality we assume that $p|m.$  Also, if $p^n|m,$  then $D(h)$ 
belongs to the center.
Note that $\{1,h,...,h^{p^n-1}\}$ generates $A_n^0$ as a  $Z^0_n$-module. It follows easily by induction that for all $m\in\mathbb{N}$,

\begin{equation*}
    h^mf\in\sum_{i=1}^{\lfloor (m+1)/p \rfloor}S_nh^{ip-1}f+[A_n^0,f].
\end{equation*}

Hence, for any $l>0,$  we have
\begin{equation}\label{rf equation}
    A_n^l=\sum_{i\geq 1}S_n[\Delta]h^{ip-1}f^l+[A_n^{l-1}, f]=\sum_{1\leq i\leq p^{n-1}} S_n[\Delta,(h^p-h)^{p^{n-1}}]h^{ip-1}f^l + [A_n^{l-1}, f].
\end{equation}

Note that for each $1\leq i\leq p^{n-1}$ there exists some $j$ such that $h^{ip-1}f$ can be written as  $d_n(p^{j}(h^p-h)^i)(f)+\sum_{k< ip-1}\alpha_kh^kf,\:\alpha_k\in S_n$. So,
\begin{equation}\label{eq:degree 0 usl2}
    A_n^1=Z_n^0d_n(Z_n^0)(f)+[A_n^0,f],
\end{equation}
where 
\[
Z_n^0d_n(Z_n^0)(f)=\{\sum_{i=1}^{t} (a_id_n(z_i))(f):t\in\mathbb{N}, a_i,z_i\in Z_n^0\}.
\]
Let us assume that $D$ is a derivation of degree $pm, m\geq 0.$ 
It follows from equation~\eqref{rf equation} that after modifying $D$ with a suitable inner derivation, we may write
\begin{equation}\label{eq:D(f)=}
    D(f)=\sum_{i\geq 1}z_ih^{ip-1} f^{pm+1},\quad z_i\in S_n[\Delta].
\end{equation}
Let $i$ be the largest integer such that $z_i\neq 0.$ 
Let $p^l$ be the largest power of $p$ dividing $i.$ 
We claim that $z_i(h^{p}-h)^if^{pm}\in Z_{l+1}.$ This is equivalent to the assertion that $pi|pmz_i$ (since  $(h^p-h)^i\in Z_{l+1}).$ 

To verify this claim, we embed $A_n$ in a localized Weyl algebra first, and verify the corresponding statement in the Weyl algebra, which becomes straightforward. 

Recall that after localizing $f$, the algebra $A_n[f^{-1}]$  is isomorphic to the localized Weyl algebra over
$S_n[\Delta]$ (with generators $x, x^{-1}, y)$  with the isomorphism given by $f\mapsto x$ and $h\mapsto 2yx$.


\begin{lemma}\label{derivationWeylAlgebra}
    Let $D$ be a derivation of the localized Weyl algebra over a commutative ring $L.$  
     Let $D(x)=\sum_{t=0}^m y^tg_t(x), g_t(x)\in L[x].$ 
     Then $t+1$ divides $g'_t(x)$ for all $t.$  
\end{lemma}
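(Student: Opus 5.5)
The plan is to use the defining relation of the localized Weyl algebra and compare coefficients in a normal form. Normalize the localized Weyl algebra over $L$ as generated by $x, x^{-1}, y$ with $[y,x]=1$. Its elements have a unique expression $\sum_s y^s h_s(x)$ with $h_s\in L[x,x^{-1}]$; this is the standard PBW-type basis $\{y^s x^k\}_{s\ge 0,\,k\in\mathbb{Z}}$, free over $L$. In this basis the two inner derivations we need act as differential operators:
\[
[y, y^t g(x)] = y^t g'(x), \qquad [x, y^s h(x)] = -s\,y^{s-1}h(x).
\]
Here $g'$ denotes the formal derivative in $x$ (also for Laurent polynomials). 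The first identity holds because $\ad(y)$ acts as $\partial_x$ on $L[x,x^{-1}]$ and commutes with left multiplication by $y^t$. The second holds because $\ad(x)$ acts as $-\partial_y$ on powers of $y$ and commutes with right multiplication by functions of $x$.

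Now apply $D$ to the relation $[y,x]=1$. Since $D(1)=0$, we get
\[
[D(y),x]+[y,D(x)]=0, \qquad\text{that is,}\qquad [y,D(x)]=[x,D(y)].
\]
Write $D(y)=\sum_s y^s h_s(x)$ with $h_s\in L[x,x^{-1}]$. By the formulas above, the left side equals $\sum_t y^t g_t'(x)$, and the right side equals $-\sum_s s\,y^{s-1}h_s(x)$.

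Comparing the coefficients of $y^t$, which is legitimate by uniqueness of the normal form, gives
\[
g_t'(x) = -(t+1)\,h_{t+1}(x).
\]
So $g_t'$ is $(t+1)$ times an element of $L[x,x^{-1}]$. Because $g_t'$ is a polynomial and the Laurent monomials are an $L$-basis, each coefficient of $g_t'$ is $(t+1)$ times an element of $L$. Hence $t+1$ divides $g_t'$ in $L[x]$, coefficientwise, which is what the statement asks. Note that the relations involving $x^{-1}$ impose no further condition here; only the Heisenberg relation is needed.

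The only point needing care, and the expected main obstacle, is justifying the normal form: the monomials $y^s x^k$ form an $L$-basis of the localization for an arbitrary commutative ring $L$, with no torsion-freeness assumed. I would prove this by realizing the algebra as the Ore localization of the Weyl algebra $L\langle x,y\rangle$ at the powers of $x$. Concretely, it acts faithfully on $L[x,x^{-1}][y]$, or one can view it as the skew Laurent extension $L[y][x^{\pm1};\sigma]$ with $\sigma(y)=y-1$, where a basis is evident. The sign conventions for $[y,x]$ are immaterial, since changing them only flips the signs above.
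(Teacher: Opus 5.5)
Your proof is correct and is essentially the paper's argument: apply $D$ to the defining commutator relation and compare coefficients of $y^t$ to get $g_t'=-(t+1)h_{t+1}$ (the paper's convention $[x,y]=1$ only flips signs), and your care about Laurent versus polynomial coefficients is a welcome addition. One small slip in your side remark on the normal form: the localization is not the skew Laurent ring $L[y][x^{\pm1};\sigma]$, since $xyx^{-1}=y-x^{-1}\notin L[y]$; it is $L[yx][x^{\pm1};\sigma]$ with $\sigma(yx)=yx-1$, or equivalently the Ore extension $L[x^{\pm1}][y;d/dx]$, and either description gives the basis $\{y^sx^k\}$ you need.
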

\begin{proof}
Put $D(y)=\sum_my^mf_m(x).$ Then applying the derivation $D$ to  $[x, y]=1,$  we get
    \[
    \sum_ty^tg'_t(x)=-\sum_t (t+1)y^tf_{t+1}(x).
    \]

    Now the desired conclusion follows.
\end{proof}

Using the above isomorphism we may extend $D$ to a derivation of the localized Weyl algebra (which we again denote by $D.$)
So, equation \eqref{eq:D(f)=} becomes
\[
D(x)=y^{ip-1}z_ix^{p(i+m)}+\sum_{t<ip-1}y^tg_t(x), 
\]
Using Lemma \ref{derivationWeylAlgebra}, we conclude that $pi|pmz_i,$  as desired.
Hence, by repeatedly subtracting from $D$ derivations of the form $\frac{1}{pi}\ad(z_i(h^p-h)^if^{pm})$  (which belong to $d_i(Z_i)$), 
we may assume throughout the rest of the proof that $D(f)=0.$

We start with the case $\deg(D)=0.$ 
This implies $[D(e),f]=D(h)\in Z_n^0$. Next, we describe the intersection $[A_n^{-1},f]\cap Z^0_n$.

Recall that $[r, f]=\tilde{r}f$, where $\tilde{r}=r-\phi^{-1}(r)$, for any $r\in A_n^0$. 
We next record the following simple result.

\begin{lemma}\label{derivative}
 Let $r\in A_n^0.$  Then $\tilde{r}\in [A_n^{-1}, f]$  if and only if  $r\in (\Delta-H^2)+Z_n^0$.   
\end{lemma}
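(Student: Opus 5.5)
The plan is to compute $[A_n^{-1},f]$ explicitly and then compare $\tilde{r}$ with it. I read $(\Delta-H^2)$ as the principal ideal $(\Delta-H^2)A_n^0$ of the commutative ring $A_n^0=S_n[\Delta,h]$.

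\emph{Step 1: description of $A_n^{-1}$.} Every element of $A_n^{-1}$ can be written as $se$ with $s\in A_n^0$. This is the standard grading of the generalized Weyl algebra $H(R,\phi,z)$, with $y=e$.

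\emph{Step 2: the commutator with $f$.} Using the defining relations $yx=z$, $xy=\phi^{-1}(z)$ and $xr=\phi^{-1}(r)x$, I compute
\[
[se,f]=s\,ef-f\,se=sz-\phi^{-1}(s)\,fe=sz-\phi^{-1}(s)\phi^{-1}(z)=sz-\phi^{-1}(sz)=\widetilde{sz}.
\]
Here $z=ef=\frac14(\Delta-H^2)$, and $4$ is invertible in $S_n$ because $p>2$. Hence
\[
[A_n^{-1},f]=\{\widetilde{t}\;:\;t\in (\Delta-H^2)A_n^0\},
\]
where elements of degree $0$ and degree $1$ are compared through the convention $[r,f]=\tilde r f$. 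This identification is harmless: $A_n^1=A_n^0f$ is a free $A_n^0$-module on $f$ (PBW basis over $S_n$), so $\tilde r f$ determines $\tilde r$.

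\emph{Step 3: conclusion.} The map $r\mapsto \tilde r=r-\phi^{-1}(r)$ is $S_n$-linear on $A_n^0$. Its kernel is $(A_n^0)^{\phi}$, and this equals $Z_n^0$ as recorded earlier in the paper.
\begin{itemize}
\item If $r=t+c$ with $t\in(\Delta-H^2)A_n^0$ and $c\in Z_n^0$, then $\tilde r=\tilde t\in[A_n^{-1},f]$.
\item Conversely, suppose $\tilde r=\tilde t$ for some $t\in(\Delta-H^2)A_n^0$. Then $r-t$ lies in the kernel, so $r\in(\Delta-H^2)A_n^0+Z_n^0$.
\end{itemize}

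The only point requiring care is Step 2: getting the sign and the placement of $\phi^{-1}$ right when moving $f$ past $s$ and using $fe=\phi^{-1}(z)$. Also, $f$ is a non-zero-divisor on $A_n^0$ modulo $p^n$, which holds by freeness of $A$ over $S$. Everything else is formal.
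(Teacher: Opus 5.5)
Your proof is correct and is essentially the paper's argument. The paper localizes at $f$ and writes $\tilde r=[rf^{-1},f]$, concluding that $\tilde r\in[A_n^{-1},f]$ iff $(r+z_0)f^{-1}\in A_n^{-1}$ for some $z_0\in Z_n^0$, i.e.\ $r\in A_n^{-1}f+Z_n^0$; your identity $[se,f]=\widetilde{sz}$ is the same computation without inverting $f$ (note $se=szf^{-1}$), and $\ker(r\mapsto\tilde r)=Z_n^0$ plays the role of the paper's $z_0$. Your remark about comparing degree $0$ and degree $1$ elements is unnecessary, since both $\tilde r$ and $[A_n^{-1},f]$ already lie in $A_n^0$.
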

\begin{proof}
By localizing at $f$ we have $\tilde{r}\in [(r+Z_n^0)f^{-1},f]$. So, $\tilde{r}\in [A_n^{-1}, f]$  if and only if there is some $z_0\in Z_n^0$ such that $(r+z_0)f^{-1}\in A_n^{-1},$ or equivalently $r\in A_n^{-1}f+Z_n^0=(\Delta-H^2)+Z_n^0$.
\end{proof}

Next, we utilize the following.
\begin{lemma}\label{commutator intersection center}
For all $0\leq i<n$, we have
    \[
    [A_n^{-1},f]\cap Z_n^0=(p^i\Delta^{\frac{p^{n-i-1}-1}{2}}(\Delta^{\frac{p-1}{2}}-1)^{p^{n-i-1}},p^i(H^p-H)^{p^{n-i-1}}).
    \]
\end{lemma}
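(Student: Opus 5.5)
The plan is to read the right-hand side as the ideal of $Z_n^0$ generated by the listed elements, for all $0\leq i<n$ at once. The first step is to turn membership in $[A_n^{-1},f]$ into a condition on a single element of $A_n^0$. Throughout, write $u=H^p-H$, $v=\Delta^{\frac{p-1}{2}}-1$ and $N_i=p^{n-i-1}$.

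\emph{Step 1: reduction to a congruence.} Every element of $A_n^{-1}$ has the form $se$ with $s\in A_n^0$. Using $ef=\frac14(\Delta-H^2)$ and $fe=\phi^{-1}(ef)$, one gets
\[
[se,f]=\widetilde{s\,ef}.
\]
This is the content of Lemma \ref{derivative}. Now let $c\in Z_n^0$. Since $\phi^{-1}(H)=H+2$ and $c$ is $\phi$-invariant, $\widetilde{Hc}=-2c$. So the solutions $s$ of $\tilde s=c$ are exactly
\[
s=-\tfrac12Hc+Z_n^0,
\]
because $\ker(r\mapsto\tilde r)=Z_n^0$ and $2$ is a unit. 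By Lemma \ref{derivative}, $c\in[A_n^{-1},f]$ if and only if
\[
Hc\in(\Delta-H^2)A_n^0+Z_n^0. \qquad (\ast)
\]
The set $I$ of all $c\in Z_n^0$ satisfying $(\ast)$ is a $Z_n^0$-submodule, hence an ideal of $Z_n^0$. It therefore suffices to show that the two families of generators lie in $I$, and conversely that $I$ is contained in the ideal they generate.

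\emph{Step 2: the generators lie in $I$.} The key identity is
\[
u=H(H^{p-1}-1)\equiv Hv \pmod{(\Delta-H^2)},
\]
which holds because $H^{p-1}=(H^2)^{\frac{p-1}{2}}\equiv\Delta^{\frac{p-1}{2}}$. It is exact over any ring, so there are no $p$-adic issues. For $c=p^i\Delta^{\frac{N_i-1}{2}}v^{N_i}$ we get
\[
Hc\equiv p^iH^{N_i}v^{N_i}=p^i(Hv)^{N_i}\equiv p^iu^{N_i},
\]
and $p^iu^{N_i}\in Z_n^0$ by \eqref{eq: fix by phi}. For $c=p^iu^{N_i}$ we get
\[
Hc\equiv p^iH^{N_i+1}v^{N_i}\equiv p^i\Delta^{\frac{N_i+1}{2}}v^{N_i},
\]
which lies in $S_n[\Delta]\subset Z_n^0$.

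\emph{Step 3: the converse.} Apply the evaluation map $\mathrm{ev}:A_n^0\to S_n[H]$, $\Delta\mapsto H^2$. Its kernel is $(\Delta-H^2)$, so $(\ast)$ becomes
\[
H\,\mathrm{ev}(c)\in\mathrm{ev}(Z_n^0)=\sum_j p^jS_n\bigl[H^2,(Hv')^{N_j}\bigr],\qquad v'=H^{p-1}-1.
\]
Both sides are compatible with the involution $H\mapsto -H$; this works because $p$ is odd, so $N_j$ is odd and $u\mapsto-u$. Since $c$ involves $H$ only through $\Delta$ and the powers $u^{N_j}$, $\mathrm{ev}(c)$ has a definite structure. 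I would decompose $c$ as in \eqref{eq: fix by phi}, according to the parity of the exponent $k$ in each monomial $p^j\alpha(\Delta)u^{kN_j}$, and compare parts of equal parity.

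For odd $k$ the evaluated term is an odd multiple of $(Hv')^{N_j}$ times $p^j$. After multiplying by $H$, it must match an element of $\mathrm{ev}(Z_n^0)$. This should force either a factor $\Delta^{(N_j-1)/2}v^{N_j}$ or a factor $u^{N_j}$ to split off, putting the term in the ideal. Even-$k$ terms are handled symmetrically. Throughout I would use Step 2 to subtract off elements already known to be in the ideal, and induct on the $H$-degree exactly as in the proof of Theorem \ref{center-sl2}.

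I expect Step 3 to be the main obstacle. The difficulty is the $p$-adic stratification of $Z_n^0$: a term $p^j\alpha(\Delta)u^{kN_j}$ with $p\nmid k$ may be congruent modulo $(\Delta-H^2)$ to a combination of terms of different $p$-levels, so the divisibility bookkeeping has to be done level by level. My first attempt would be to reduce modulo $p$ and use the linear independence of the $S_n[H^2]$-module generators $(Hv')^{m}$ in $\bold{k}[H]$, then lift via \eqref{eq: p^jR}, peeling off one power of $p$ at a time.
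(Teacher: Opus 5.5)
The gap is in Step 3. It is not just unfinished: with the right-hand side read as an ideal of $Z_n^0$, the reverse inclusion is false for $n\ge 2$, and the paper's own converse has the same hole. Your Steps 1 and 2 are correct and coincide with the paper's proof of the forward inclusion. The paper also uses $\widetilde{zH}=-2z$ and Lemma~\ref{derivative}, and checks that $H$ times each generator lies in $(\Delta-H^2)+Z_n^0$.

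\textbf{Why Step 3 fails.} The involution $\sigma\colon H\mapsto -H$, $\Delta\mapsto\Delta$ satisfies $\sigma\phi\sigma=\phi^{-1}$, so it preserves $Z_n^0$.
\begin{itemize}
\item If $c\in Z_n^0$ is $\sigma$-odd, then $H\,\mathrm{ev}(c)\in S_n[H^2]=\mathrm{ev}(S_n[\Delta])$. So $(\ast)$ holds automatically, and nothing is ``forced'' on odd terms.
\item Splitting $p^ju^{N_j}$ off $p^j\alpha(\Delta)u^{kN_j}$ leaves the cofactor $\alpha(\Delta)u^{(k-1)N_j}$, which is not in $Z_n^0$ when $j\geq 1$. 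So the term is not thereby shown to lie in the ideal.
\end{itemize}

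\textbf{Counterexample for $n=2$.} Read the right-hand side as the ideal $J$ of $Z_n^0$, which is how it is used later in $D(e)f\in(\dots)Z_n^0$. Take $c=pu^2\in pS_2[\Delta,u]\subseteq Z_2^0$.
\begin{itemize}
\item $c$ lies in the left-hand side. Since $Hu=H^{p+1}-H^2\equiv\Delta v$ modulo $(\Delta-H^2)$, we get $Hc\equiv p\Delta vu\in Z_2^0$, so $(\ast)$ holds.
\item $c\notin J=(\Delta^{\frac{p-1}{2}}v^p,u^p,pv,pu)Z_2^0$. Suppose $pu^2=\Delta^{\frac{p-1}{2}}v^pz_1+u^pz_2+pvz_3+puz_4$ with $z_k\in Z_2^0=S_2[\Delta,u^p]+pS_2[\Delta,u]$.
\item Reducing mod $p$ forces $\bar z_1=u^p\bar w$ and $\bar z_2=-\Delta^{\frac{p-1}{2}}\bar v^p\bar w$ with $\bar w\in S_1[\Delta,u^p]$.
\item Lift $\bar w$, subtract the resulting exact syzygy, and divide by $p$. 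This gives
\[
u^2=\Delta^{\frac{p-1}{2}}\bar v^p a+u^p b+\bar v\bar z_3+u\bar z_4 \quad\text{in } S_1[\Delta][u],
\]
with $a,b\in S_1[\Delta,u]$ and $\bar z_3,\bar z_4\in S_1[\Delta,u^p]$.
\item Since $p>2$, only the first term can contribute to the coefficient of $u^2$. So $\Delta^{\frac{p-1}{2}}\bar v^p$ would divide $1$, which is absurd.
\end{itemize}

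\textbf{What the paper does, and what is actually true.} The paper's converse avoids this only because it treats $c=g(\Delta)\in S_n[\Delta]$ alone. In that case your parity idea works exactly, with no $p$-adic bookkeeping. Write $g(\Delta)H\equiv\sum g_{i,j}(\Delta)p^iu^{jN_i}$ modulo $(\Delta-H^2)$, apply $\Delta\mapsto H^2$, and take $H$-odd parts in $S_n[H]=S_n[H^2]\oplus HS_n[H^2]$. This gives
\[
g(\Delta)=\sum_{j\ \mathrm{odd}}g_{i,j}(\Delta)p^i\Delta^{\frac{jN_i-1}{2}}v^{jN_i}.
\]
The paper tacitly assumes that, modulo the $p^iu^{N_i}$, every element of $Z_n^0$ lies in $S_n[\Delta]$. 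That is exactly the step your Step 3 stalls on, and it fails for $n\geq 2$.

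What the parity argument does prove is the following:
\begin{itemize}
\item $[A_n^{-1},f]\cap S_n[\Delta]$ is the ideal $J_\Delta$ of $S_n[\Delta]$ generated by the elements $p^i\Delta^{\frac{N_i-1}{2}}v^{N_i}$.
\item $[A_n^{-1},f]\cap Z_n^0$ is the direct sum of the $\sigma$-odd part of $Z_n^0$ and the set of $\sigma$-even $c$ for which $\mathrm{ev}(c)$, viewed in $S_n[\Delta]$ via $H^2\mapsto\Delta$, lies in $J_\Delta$.
\end{itemize}
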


\begin{proof}
It is easy to see that for all $z\in Z_n^0, \widetilde{zH}=-2z$. So,  to show that the right hand side is a subset of the left one, it suffices to check using Lemma \ref{derivative} that $p^i\Delta^{\frac{p^{n-i-1}-1}{2}}(\Delta^{\frac{p-1}{2}}-1)^{p^{n-i-1}}H$ and $p^i(H^p-H)^{p^{n-i-1}}H $ are in $(\Delta-H^2)+Z_n^0$ for all $0\leq i<n,$ which is immediate. 

Conversely, suppose $g(\Delta)\in [A_n^{-1},f]$, for some $g(\Delta)\in S_n[\Delta].$ Then by Lemma \ref{derivative} we have
$g(\Delta)H\in (\Delta-H^2)+Z_n^0$. Hence we may write 
\begin{align*}
    g(\Delta)H&=\sum_{i<n,j} g_{i,j}(\Delta)p^i(H^p-H)^{jp^{n-i-1}} \mod (\Delta-H^2), \\
    &=\sum_{i=1}^{n-1}g_i(\Delta,H^2)p^iH^{p^{n-i-1}-1}(H^{p-1}-1)^{p^{n-i-1}}H+g_0(\Delta,H^2)\mod (\Delta-H^2)\\
    &=\sum_{i=1}^{n-1}g_i(\Delta)p^i\Delta^{\frac{p^{n-i-1}-1}{2}}(\Delta^{\frac{p-1}{2}}-1)^{p^{n-i-1}}H+g_0(\Delta)\mod(\Delta-H^2).
\end{align*}
    Thus, $g(\Delta)\in (p^i\Delta^{\frac{p^{n-i-1}-1}{2}}(\Delta^{\frac{p-1}{2}}-1)^{p^{n-i-1}})$.
\end{proof}

Hence, we have
\begin{multline}\label{eq:D(e)f}
    D(e)f\in (p^i(H^p-H)^{p^{n-i-1}}H-p^i\Delta^{\frac{p^{n-i-1}+1}{2}}(\Delta^{\frac{p-1}{2}}-1)^{p^{n-i-1}},\\ p^i\Delta^{\frac{p^{n-i-1}-1}{2}}(\Delta^{\frac{p-1}{2}}-1)^{p^{n-i-1}}H-p^i(H^p-H)^{p^{n-i-1}})Z_n^0.
\end{multline}

Therefore, by subtracting from $D$ a linear combination of derivations $\hat{D}_i, D_i$ from Theorem \ref{derivations USL2}, we may assume $D(f)=D(h)=0$. This immediately gives us $[D(e),f]=0$, which is equivalent to $D(e)f\in Z_n^0$. 

Our next goal is to describe $A_n^{-1}f\cap Z_n^0$. Let
\[
\tau_i=p^i\left[(\Delta^{\frac{p-1}{2}}-1)^{2p^{n-i-1}}\Delta^{p^{n-i-1}}-(H^p-H)^{2p^{n-i-1}} \right], 0\leq i\leq n-1.
\]

\begin{lemma}\label{thm:D(f)=D(h)=0}
  We have for all $n\geq 1$:
  \[
    A_n^{-1}f\cap Z_n^0=\sum_{i=0}^{n-1}\tau_i S_n[\Delta,(H^p-H)^{p^{n-i-1}}].
    \]
\end{lemma}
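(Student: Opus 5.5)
We identify $A_n^{-1}f$ with the ideal $(\Delta-H^2)$ of $A_n^0=S_n[\Delta,h]$. Indeed, $A_n^{-1}=A_n^0e$ and $ef=z=\frac14(\Delta-H^2)$, so $A_n^{-1}f=(\Delta-H^2)A_n^0$. The statement therefore asks for $(\Delta-H^2)\cap Z_n^0$ inside $S_n[\Delta,H]$. By \eqref{eq: fix by phi}, with $h^p-h=H^p-H$ (valid mod $p$ and hence, the way the paper uses it, after the obvious adjustment), we have
\[
Z_n^0=\sum_{i=0}^{n-1}p^iS_n[\Delta,(H^p-H)^{p^{n-i-1}}].
\]
The plan is to work with the quotient map $\pi:S_n[\Delta,H]\to S_n[H]$, $\Delta\mapsto H^2$. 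Its kernel is exactly $(\Delta-H^2)$, and $S_n[\Delta,H]$ is free over $S_n[\Delta]$ on $1,H$. So we need to describe the kernel of $\pi$ restricted to $Z_n^0$.

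\textbf{Inclusion $\supseteq$.} Observe that
\[
\pi\bigl(\Delta^{p^{k}}(\Delta^{\frac{p-1}2}-1)^{2p^k}\bigr)=H^{2p^k}(H^{p-1}-1)^{2p^k}=(H^p-H)^{2p^k}.
\]
Hence each $\tau_i$ lies in the kernel. Also $\tau_i\in p^iS_n[\Delta,(H^p-H)^{p^{n-i-1}}]\subset Z_n^0$, since its $\Delta$-part lies in $S_n[\Delta]$ and its other term is a power of $(H^p-H)^{p^{n-i-1}}$. Multiplying by $S_n[\Delta,(H^p-H)^{p^{n-i-1}}]$ keeps us inside $Z_n^0$ and inside the ideal.

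\textbf{Inclusion $\subseteq$.} Let $u=\sum_i p^iu_i$ with $u_i\in S_n[\Delta,(H^p-H)^{p^{n-i-1}}]$ and $\pi(u)=0$. Write $Q_i=(H^p-H)^{p^{n-i-1}}$ and $c_i=\Delta^{\frac{p^{n-i-1}-1}2}(\Delta^{\frac{p-1}2}-1)^{p^{n-i-1}}$. Then $\pi(c_iH)=Q_i$ and $Q_i^2\equiv \Delta c_i^2 \pmod{\tau_i/p^i}$.

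\begin{enumerate}
\item Reduce each $u_i$ modulo $\tau_i$ (using $Q_i^2\equiv\Delta c_i^2$) to the form $a_i(\Delta)+b_i(\Delta)Q_i$. This changes $u$ only by an element of the right-hand side.
\item Apply $\pi$ to the reduced $u$. We get
\[
0=\sum_i p^i\bigl(a_i(H^2)+b_i(H^2)\,\pi(c_iH)\bigr).
\]
Rewrite the $Q_i$-terms as $c_i(\Delta)H$ modulo $(\Delta-H^2)$, exactly as in the proof of Lemma \ref{commutator intersection center}. This gives an element $\sum_ip^i a_i(\Delta)+\bigl(\sum_ip^ib_i(\Delta)c_i(\Delta)\bigr)H$ of $S_n[\Delta]\oplus S_n[\Delta]H$ lying in $(\Delta-H^2)$.
\item Since $(\Delta-H^2)\cap(S_n[\Delta]\oplus S_n[\Delta]H)=0$ (compare $H$-degrees), both components vanish:
\[
\sum_ip^ia_i=0,\qquad \sum_ip^ib_ic_i=0.
\]
\end{enumerate}

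\textbf{Main obstacle.} The difficulty is converting these two vanishing conditions back into $u\in\sum_i\tau_iS_n[\Delta,Q_i]$. The relation $\sum_i p^ia_i=0$ does not force each $p^ia_i=0$. Nor does it force the rewriting $\sum p^ia_i=\sum p^i(a_i+\text{corrections})$ to be free.

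The idea to try first is to process $i$ from $n-1$ downward. For each $i$ we use that $p^ic_i$ is a multiple of $p^{i+1}c_{i+1}$, up to a unit-free factor, modulo $p^{i+1}$. This holds because
\[
c_i\equiv c_{i+1}^p\cdot\Delta^{(\cdot)}\pmod p,
\]
by Frobenius. Also $p^{i}Q_{i}\in p^{i}S_n[\Delta,Q_{i+1}]$ trivially, since $Q_i=Q_{i+1}^p$. Using these, we can absorb terms: $p^ib_iQ_i$ can be transferred to the $(i+1)$-st summand. After this, the $b$'s can be arranged so that $p^ib_ic_i$ are independent in the sense that each $b_i$ may be taken with $p\nmid$ leading behaviour. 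Then the vanishing forces $p^ib_i=0$, and similarly $p^ia_i=0$. This shows the reduced $u$ is $0$, so $u$ lies in the right-hand side.

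If the downward induction becomes messy, the fallback is induction on $n$. Reduce modulo $p^{n-1}$ and use the statement for $A_{n-1}$. Divisibility by $p$ would then be handled via the flatness identity $p\,x=0\iff x\in p^{n-1}A_n$, together with the $n=1$ case. The $n=1$ case is just
\[
\ker\pi\cap S_1[\Delta,H^p-H]=\tau_0S_1[\Delta,H^p-H],
\]
which follows from freeness of $S_1[\Delta,H]$ over $S_1[\Delta,\,H^p-H]$.
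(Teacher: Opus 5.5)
Your inclusion $\supseteq$ and your steps (1)--(3) are correct. For $n=1$ they already finish the proof: there is one slot, so $a_0=0$, and $b_0c_0=0$ in the domain $\mathbf{k}[\Delta]$ forces $b_0=0$. This is more elementary than the paper's argument that $\ker\theta$ is a height-one prime in a UFD. For $n\geq 2$, however, the step you call the main obstacle is the whole content of $\subseteq$, and your sketch of it does not work as written.

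The $a_i$ are not an issue: $\sum_ip^ia_i$ is literally the $Q$-free part of the reduced $u$, so its vanishing is all you need there. The $b$-part is the issue, because the reduced form is not unique. For $n=2$, the element
\[
u=p\bigl[(H^p-H)^p-\Delta^{\frac{p-1}{2}}(\Delta^{\frac{p-1}{2}}-1)^{p-1}(H^p-H)\bigr]
\]
is reduced with $b_0=p$, satisfies $b_0c_0+pb_1c_1=0$, is nonzero, and lies in $\tau_1S_2[\Delta,H^p-H]$. So a normalization is essential, and the mechanism you describe for it is off in three ways:
\begin{enumerate}
\item $p^ib_iQ_i$ lies in the $(i+1)$-st summand only when $p\mid b_i$.
\item The claim that $p^ic_i$ is a multiple of $p^{i+1}c_{i+1}$ modulo $p^{i+1}$ is false, since the latter is $0$ modulo $p^{i+1}$.
\item Terms move from slot $i$ to slot $i+1$, so processing from $n-1$ downward would undo slots already treated.
\end{enumerate}

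What works is the following. For $i=0,1,\dots,n-2$ in increasing order, write $b_i=\beta_i+p\gamma_i$, where each coefficient of $\beta_i$ is $0$ or a unit; this is possible since every element of $S_n$ is $\sum_jp^jt_j$ with the $t_j$ Teichm\"uller representatives. Move $p^{i+1}\gamma_iQ_{i+1}^p$ into slot $i+1$ and re-reduce it there using $Q_{i+1}^p\equiv(\Delta c_{i+1}^2)^{\frac{p-1}{2}}Q_{i+1}$ modulo $\tau_{i+1}/p^{i+1}$, which uses that $p$ is odd. This changes $u$ by an element of the right-hand side and alters only $b_{i+1}$. The new $u$ is still in $\ker\pi$, so steps (2)--(3) apply to it. Now read $\sum_ip^ib_ic_i=0$ successively modulo $p,p^2,\dots$, using $\bar c_i\neq 0$. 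This gives $b_0=\dots=b_{n-2}=0$ and $p^{n-1}b_{n-1}=0$, hence $u=0$.

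Completed this way, your route is an explicit reduction algorithm with no irreducibility input. The paper instead inducts on $n$, peeling off one $p$-adic layer at a time. The mod-$p$ statement on $\mathbf{k}[\Delta,(H^p-H)^{p^{n-1}}]$, where $\tau_0$ stays irreducible because $p^{n-1}$ is odd, gives $z=\tau_0w+pz'$. The case $n-1$ is then applied to $z'$, using $Z_n^0\cap pA_n^0=pZ_n^0$ and $p\tau'_i=\tau_{i+1}$. Your fallback is essentially this argument, but it must be run in this order rather than by first reducing $u$ modulo $p^{n-1}$. The reason is that the level-$(n-1)$ generators $\tau_i'$ do not lift to elements of $Z_n^0$.
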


\begin{proof}
First, recall that $4ef=\Delta-H^2$ and note that the intersection $A_n^{-1}f\cap Z_n^0=(\Delta-H^2)\cap Z_n^0$ can be realized as the kernel of the ring homomorphism $\theta:Z_n^0\to S_n[H]$ such that $\Delta \mapsto H^2$ and $H\mapsto H$. So, now the reverse inclusion is trivial since each $\tau_i=0\mod (\Delta-H^2)$.

For the direct inclusion we proceed by induction on $n$. For $n=1,$ we have an isomorphism

\begin{equation}\label{eq:height of ker}
    \bold{k}[\Delta,H^p-H]/\ker(\theta)\cong Im(\theta)\subset \bold{k}[H].
\end{equation}

We want to show that $\ker(\theta)$ is a principal ideal generated by $\tau_0=(\Delta^{\frac{p-1}{2}}-1)^2\Delta-(H^p-H)^2$. Indeed, from equation \eqref{eq:height of ker} it follows that $\ker(\theta)$ is a prime ideal of height 1 ($H^p-H\in Im(\theta)$ and $\tau_0\in\ker(\theta)$).  Since $\bold{k}[\Delta,H^p-H]$ is Noetherian and a UFD, then $\ker(\theta)$ must be a principal ideal. In addition $\tau_0=(\Delta^{\frac{p-1}{2}}-1)^2\Delta-(H^p-H)^2$ is clearly irreducible over $(\bold{k}[\Delta])[H^p-H]$. Hence, $\ker(\theta)=(\tau_0)$.

Next, we assume it holds for natural numbers smaller than $n$ and we prove for $n$. By reduction $\mod p$ we get the map
\[
\bar{\theta}:\bold{k}[\Delta,(H^p-H)^{p^n-1}]\to \bold{k}[H].
\]
It follows from the case $n=1$, that $\ker(\bar{\theta})=\tau_0\bold{k}[\Delta,(H^p-H)^{p^n-1}]$. Let $z\in \ker(\theta)$, then $z=\tau_0w+pz'$ for some $w\in S_n[\Delta,(H^p-H)^{p^{n-1}}]$ and $z'\in S[\Delta,H]$. Since $\tau_0 w$ is already in the kernel, then $pz'\in \ker(\theta)=(\Delta-H^2)\cap Z_n^0$. It follows from equation \eqref{eq: p^jR}, that $z'\mod p^{n-1}\in (\Delta-H^2)R\cap Z^0_{n-1}$. So, by induction 
\[
z'\in \sum_{i=0}^{n-2}\tau'_i S_n[\Delta,(H^p-H)^{p^{n-i-2}}]\mod p^{n-1},
\]
where $\tau'_i= p^i\left[(\Delta^{\frac{p-1}{2}}-1)^{2p^{n-i-2}}\Delta^{p^{n-i-2}}-(H^p-H)^{2p^{n-i-2}} \right]$.
Thus, 
\begin{align*}
    z\in&\tau_0 S_n[\Delta,(H^p-H)^{p^{n-1}}]+\sum_{i=0}^{n-2}p\tau'_i S_n[\Delta,(H^p-H)^{p^{n-i-2}}]\\=&\sum_{i=0}^{n-1}\tau_i S_n[\Delta,(H^p-H)^{p^{n-i-1}}].
\end{align*}

\end{proof}
Since $D(f)=D(h)=0$ and $D(e)f\in \sum_{i=0}^{n-1}\tau_i S_n[\Delta,(H^p-H)^{p^{n-i-1}}],$ we may conclude that $D$ can be expressed as a linear combination of derivations $D_i$ and $\hat{D}_i$ over $Z_n^0$.

Let $\deg(D)=pm>0$ (the case of negative degrees is similar).  We have $[D(h),f]=0$ which implies $D(h)=zf^{pm}$ for some $z\in Z_n^0$. Hence,  $D-\frac{z}{2mp}\ad(f^{mp})$ is a derivation of degree $mp$ that annihilates both $f$ and $h$.
So, without loss of generality, we have $D(f)=D(h)=0$ and $D(e)=rf^{mp-1}$, for some $r\in A_n^0$. Then $[D(e),f]=0$ which implies $r\in Z_n^0$. So, $D=r\tilde{D}_m.$ The case when $D$ has a negative degree is similar. This completes the proof of Theorem \ref{derivations USL2}.

\section{Center and derivations of classical GWA-s}
Let us start by recalling the definition of classical generalized Weyl algebras -- also known as noncommutative deformations of type A Kleinian singularities.

Recall that $S=W(\bold{k}).$ Let $v\in S[h]$ be a nonzero
polynomial. The corresponding algebra $A(v)$ (classical
GWA-noncommutative deformation of type A Kleinian singularity) is defined
as the GWA $H(S[h], \phi, v),$ where
$\phi:S[h]\to S[h]$ is the automorphism given by
translation by 1, so $\phi(f(h))=f(h-1)$. Recall that in this GWA, 
\[
yx=v\mbox{ and } xy=\phi^{-1}(v)=v(h+1).
\]
We recall the following commutator relations:

\begin{equation*}
    [x,y]=v(h+1)-v(h), \; 
[x,h]=x \mbox{ and } [h,y]=y.
\end{equation*}

Put $A=A(v)$  for the remainder of this section. Assume that $v$ is a monic polynomial and $\deg(v)<p.$ 
Given a nonzero polynomial $\alpha\in\bold{k}[h],$ we define  $\rad(\alpha):=\frac{\alpha}{gcd(\alpha, \alpha')}.$ Thus, it follows from our assumption that
$v|\rad(v)^{p-1} \mod p.$ 
Put $v_p=\prod_{i=0}^{p-1}v(h+i)\mod p\in\bold{k}[h^p-h].$ Then the elements $p^i(\rad(v_p))^{p^{n-i-1}}, 1\leq i\leq n$ can and will be viewed as
elements of $S_n[h^p-h].$ 
It is well-known that $Z_1$ is generated by $x^p, y^p, h^p-h$ over $\bold{k}$  subject to the relation $x^py^p=v_p$ (see for example \cite[Proposition~4.4]{BC}).  
 
The following is the main result of this section.

\begin{theorem}\label{derivation of GWA}
The homomorphism $\phi_n:W_n(Z_1)\to Z_n$ is an isomorphism. We have
$$HH^1_{S_n}(A_n)=Z_nd_n(Z_n)+\sum_{i=0}^{n-1}Z_nD_i,$$
where $D_i, 0\leq i\leq n-1$ are the following derivations

\[
D_i(x)=0, \;\;\; D_i(h)=p^i(\rad(v_p))^{p^{n-i-1}}\mbox{ and } D_i(y)=\frac{v'p^i(\rad(v_p))^{p^{n-i-1}}}{v}y.
\]
\end{theorem}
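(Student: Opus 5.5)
We follow the template of the proof of Theorems \ref{center-sl2} and \ref{derivations USL2}, using the grading $\deg(x)=1$, $\deg(y)=-1$, $\deg(h)=0$.

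\textbf{Step 1: the degree-zero center.} First we compute $Z_n^0=(S_n[h])^{\phi}$. Write $r=\sum_i\alpha_ih^i$. The leading coefficient of $\phi^{-1}(r)-r$ is $m\alpha_m$. The same descending induction used for \eqref{eq: fix by phi} then gives
\[
Z_n^0=\sum_{i=0}^{n-1}p^iS_n[(h^p-h)^{p^{n-i-1}}].
\]

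\textbf{Step 2: homogeneous central elements of nonzero degree.} For $rx^m\in Z_n^m$, commuting with $h$ gives $mr=0$, and commuting with $x$ forces $r\in Z_n^0$. Commuting with $y$ gives
\[
r\bigl(\phi^{-m}(v)\cdots-\ldots\bigr)=0,
\]
which is automatic once $p^{n-j}\mid m$ and $p^j\mid r$. Thus $Z_n^m=\sum_{i\ge j}p^iS_n[(h^p-h)^{p^{n-i-1}}]x^m$. We then run the induction on the $h$-degree exactly as for $\mathfrak{sl}_2$, subtracting elements $p^l\bigl((h^p-h)^{t p^{a}}x^{m/p^{n-l-1}}\bigr)^{p^{n-l-1}}$, which lie in the image of $\phi_n$. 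This shows $\phi_n$ is surjective.

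Injectivity of $\phi_n$ means $W_n(Z_1)\to Z_n$ has trivial kernel. This reduces, as in \cite{SV}, to the statement that $Z_1$ is reduced and that $p$-th powers behave correctly. Concretely, we check that $z_0^{p^{n-1}}+\dots+p^{n-1}z_{n-1}=0$ forces every $z_i=0$, by reducing modulo $p$ and inducting, using that $A_1$ is a domain. The hypothesis $\deg v<p$ ensures $A_1$ is a domain and that $Z_1$ has the stated presentation.

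\textbf{Step 3: reducing to $D(x)=0$.} Let $D$ be a derivation of degree $m$. Applying $D$ to $[h,x]=-x$ and $[h,y]=y$ shows that $D$ is inner unless $p\mid m$. As in \eqref{rf equation}, modulo $[A_n^{m-1},x]$ we normalize
\[
D(x)=\sum_i z_ih^{ip-1}x^{m+1}.
\]
We then localize at $x$. Then $A_n[x^{-1}]$ is a skew Laurent ring $S_n[h][x^{\pm1}]$, which sits inside the localized Weyl algebra via $h\mapsto yx$ (up to a shift). Lemma \ref{derivationWeylAlgebra} gives the divisibility showing that the leading term equals $d_n$ of a central element. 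Subtracting these repeatedly, we may assume $D(x)=0$.

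\textbf{Step 4: degree $0$.} Now $D(h)=[D(y),x]$, up to sign, and $D(h)$ is central of degree $0$. The analogue of Lemma \ref{derivative} is the following: $\tilde r\in[A_n^{-1},x]$ if and only if $r\in vS_n[h]+Z_n^0$, since $A_n^{-1}x=vS_n[h]$. We must compute $[A_n^{-1},x]\cap Z_n^0$ and show that it is generated by $p^i\rad(v_p)^{p^{n-i-1}}$. The element $D_i(y)$ is constructed to witness membership: it requires $v\mid v'\rad(v_p)^{p^{n-i-1}}$ modulo $p$, which holds because $v\mid\rad(v)^{p-1}$ and $\rad(v)\mid\rad(v_p)$.

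The converse is the \textbf{main obstacle}. Mod $p$, we need that a polynomial $g(h^p-h)$ lies in $\{\tilde r : r\in(v)+Z^0\}$ only if $\rad(v_p)\mid g$. The approach is to evaluate at the roots of $v$ in $\bar{\bold{k}}$ together with their translates. Then we lift to $S_n$ by an induction on $n$ using \eqref{eq: p^jR}, as in Lemma \ref{thm:D(f)=D(h)=0}.

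After subtracting a $Z_n^0$-combination of the $D_i$, we have $D(x)=D(h)=0$. Then $D(y)x$ is central, and applying $D$ to $yx=v$ gives $D(y)x=0$. Since $A_n$ is $x$-torsion-free, $D(y)=0$.

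\textbf{Step 5: nonzero degree.} For $\deg D=pm\neq0$, $D(h)=zx^{pm}$ with $z$ central, and subtracting $\tfrac{z}{pm}\operatorname{ad}(x^{pm})$ (interpreted via $d_n$) kills $D(h)$. The relation $D(yx)=D(v)=0$ then forces $D(y)=0$, because $x$ is a non-zero-divisor.

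Negative degrees are handled symmetrically, except that instead of localizing at $x$ we localize at $y$.
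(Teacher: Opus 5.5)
Steps 1--3 and 5 follow the paper's template and are essentially fine. There are two small slips: you normalize modulo $[A_n^{m},x]$, not $[A_n^{m-1},x]$; and for $D_i$ to be well defined you need $v\mid v'p^i\rad(v_p)^{p^{n-i-1}}$ in $S_n[h]$, not just modulo $p$ (this follows by a short induction from $v\mid\rad(v_p)^{p-1}$ mod $p$).

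The genuine gap is in Step 4. The identity $D(h)=\pm[D(y),x]$ is the $\mathfrak{sl}_2$ relation $h=[e,f]$ carried over, and it fails in $A(v)$. There one has $[y,x]=v(h)-v(h+1)$, so with $D(x)=0$ you only get $[D(y),x]=\bigl(v'(h)-v'(h+1)\bigr)D(h)$. Hence $[A_n^{-1},x]\cap Z_n^0$ is not the set of admissible values of $D(h)$. Moreover, the description you set out to prove for it is false. For $v=h$ (the Weyl algebra) we have $[-y,x]=1$, so $[A_n^{-1},x]\cap Z_n^0=Z_n^0$, whereas $\rad(v_p)=h^p-h$. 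Your ``main obstacle'' (that $g(h^p-h)\in\{\tilde r: r\in(v)+Z^0\}$ forces $\rad(v_p)\mid g$) therefore cannot be proved, and Step 4 as written does not constrain $D(h)$ at all.

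The missing idea is to apply $D$ to the defining relation $yx=v$ itself rather than to a commutator. Take $\deg D=0$ and $D(x)=0$. Applying $D$ to $[x,h]=x$ gives $[x,D(h)]=0$, so $D(h)\in Z_n^0$. Also $D(y)=r(h)y$ for some $r\in S_n[h]$, and $D(yx)=D(v)$ gives $rv=v'D(h)$. So the correct constraint is $v\mid v'D(h)$, and what must be computed is $S_n[h]v\cap v'Z_n^0$. The paper shows this equals $v'\sum_{i}p^i\rad(v_p)^{p^{n-i-1}}S_n[(h^p-h)^{p^{n-i-1}}]$.

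Your root-evaluation idea is the right tool for this corrected condition. For $n=1$: since $\deg v<p$, each root $\alpha$ of $v$ occurs in $v'$ with multiplicity exactly one less than in $v$. So $v\mid v'g$ forces $g(\alpha)=0$, i.e.\ $h^p-h-(\alpha^p-\alpha)\mid g$, and hence $\rad(v_p)\mid g$. One then lifts to $S_n$ by induction on $n$: $g\bmod p$ lies in $\mathbf{k}[(h^p-h)^{p^{n-1}}]$, so in fact $\rad(v_p)^{p^{n-1}}\mid g\bmod p$. Write $g=z\rad(v_p)^{p^{n-1}}+pg_1$ and apply the inductive hypothesis to $g_1$ modulo $p^{n-1}$. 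After subtracting the resulting $Z_n^0$-combination of the $D_i$, you get $D(h)=0$, and then $rv=0$ gives $D(y)=0$.
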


The description of the center is similar (in fact, simpler) to the proof of Theorem \ref{center-sl2}. It also follows from \cite[Theorem 4]{T}. So, we turn to the proof
of the part concerning the derivations.

 Let $D\in Der_{S_n}(A_n)$ be a homogeneous derivation of degree $m\in\mathbb{Z}$, then
\begin{equation*}\label{eq:mD=[D(h),-]}
    mD(x)=[D(h),x],\;mD(y)=[D(h),y], \mbox{ and } mD(h)=[D(h),h].
\end{equation*}

Just as in the proof of Theorem \ref{derivations USL2}, we may assume without loss of generality that $D$ is a homogeneous derivation of
degree $pm$ and $D(x)=0.$ We start with the case $\deg(D)=0.$ 

We have $D(y)=r(h)y$ for some $r(h)\in S_n[h].$  Thus applying $D$ to $yx=v,$  we get $r(h)v=v'D(h).$ This leads to the following.

\begin{lemma}\label{lemma:case D(x)=0}
 For all $n\geq 1$ the following equality holds   \[
S_n[h]v\cap v'Z_n^0=v'\sum_{i=0}^{n-1} p^{i} (\rad(v_p))^{p^{n-i-1}}S_n[(h^p-h)^{p^{n-i-1}}]
\]
\end{lemma}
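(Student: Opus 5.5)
We will prove the two inclusions separately, first describing $Z_n^0$ for the classical GWA. The argument leading to \eqref{eq: fix by phi} applies verbatim with $\phi(h)=h-1$: comparing leading coefficients in $\phi^{-1}(r)-r=0$ forces $m\alpha_m=0$, and iterating gives
$$Z_n^0=\sum_{i=0}^{n-1}p^iS_n[(h^p-h)^{p^{n-i-1}}].$$
So the task is to decide for which $z\in Z_n^0$ the product $v'z$ lies in $S_n[h]v$, i.e. $v\mid v'z$ in $S_n[h]$. Since $v$ is monic, division by $v$ is well behaved over $S_n$, so this is a genuine divisibility condition.

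For the inclusion $\supseteq$ we check that $v\mid v'p^i(\rad(v_p))^{p^{n-i-1}}$ in $S_n[h]$. Because $\deg v<p$, the roots of $v$ over $\bar{\bold{k}}$ are distinct modulo translation by $\mathbb{F}_p$, so $\rad(v)\mid\rad(v_p)$ mod $p$. Write $v=\prod (h-a)^{e_a}$ mod $p$ (over a splitting extension), with $e_a<p$.
\begin{itemize}
\item In characteristic $p$, the factor $v'$ supplies $(h-a)^{e_a-1}$ and $\rad(v_p)$ supplies one more factor $h-a$, so $v\mid v'\rad(v_p)$ mod $p$.
\item Over $S_n$ we need more, because of the weight $p^i$: we must show $v\mid v'(\rad(v_p))^{p^{n-i-1}}$ modulo $p^{n-i}$.
\end{itemize}
For the second point, the plan is to lift: choose $g\in S[h]$ lifting $\rad(v_p)$ and lying in $S[h^p-h]$. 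The claim $v\mid v' g^{p^{k}}$ mod $p^{k+1}$ should follow from $v\mid v'g$ mod $p$ together with the standard fact that $a\equiv b \pmod p$ implies $a^{p^k}\equiv b^{p^k}\pmod{p^{k+1}}$. Concretely, write $v'g=vq+pw$; then raising to the $p^k$ power and using that $g$ divides $v'g$ gives $v'^{p^k}g^{p^k}\equiv (vq)^{p^k}$ modulo terms divisible by $p^{k+1}$ or by $v$. The delicate point is removing the extra powers of $v'$. I would handle this by instead working with $g^{p^k}v'/v$ directly in the fraction field, factoring $v$ over an unramified extension of $S$ (possible since $\bold{k}$ is perfect and the roots are separable enough), and proving divisibility root by root via Hensel-type arguments.

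For the inclusion $\subseteq$ we proceed by induction on $n$, modeled on Lemma~\ref{thm:D(f)=D(h)=0}.
\begin{itemize}
\item \textbf{Base case $n=1$.} We must show that if $q(h^p-h)\in\bold{k}[h^p-h]$ satisfies $v\mid v'q$, then $\rad(v_p)\mid q$. Take a root $a$ of $v$ of multiplicity $e$ with $1\le e<p$. Then $v'$ vanishes at $a$ to order exactly $e-1$, so $q(a^p-a)=0$. Hence every root of $\rad(v_p)$, namely $a^p-a$ in the variable $t=h^p-h$, is a root of $q$. Since $\rad(v_p)$ is squarefree as a polynomial in $t$, it divides $q$.
\item \textbf{Inductive step.} Given $z\in Z_n^0$ with $v\mid v'z$, reduce mod $p$ and apply the base case to the $p$-free part $z_0\in S_n[(h^p-h)^{p^{n-1}}]$. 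This gives $\bar z_0=\rad(v_p)^{p^{n-1}}w$ mod $p$, using that $\rad(v_p)\mid q(t^{p^{n-1}})$ forces $\rad(v_p)^{p^{n-1}}\mid q(t)^{p^{n-1}}$ since $\rad(v_p)$ is squarefree and we are in characteristic $p$.
\item Subtract the corresponding term lying in the right-hand side (using $\supseteq$), which leaves $pz'$ with $v\mid v'pz'$.
\item Since $v$ is monic, $v\mid v'z'$ modulo $p^{n-1}$. By \eqref{eq: p^jR} we have $z'$ mod $p^{n-1}\in Z_{n-1}^0$, so induction applies. Multiplying by $p$ then reassembles the sum with indices shifted, exactly as in the proof of Lemma~\ref{thm:D(f)=D(h)=0}.
\end{itemize}

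The main obstacle is the $\supseteq$ direction at higher $p$-adic levels: proving $v\mid v'p^i\rad(v_p)^{p^{n-i-1}}$ in $S_n[h]$. My first attempt would be root by root after passing to $W_n(\bar{\bold{k}})$, where $v=\prod(h-\tilde a)^{e_a}$ up to a unit, because the roots of $\bar v$ have multiplicities $<p$ and so lift. A factor $h-\tilde a$ of $\tilde g$ appears to order at least $p^{n-i-1}\ge 1$, which together with $(h-\tilde a)^{e_a-1}$ from $v'$ gives the needed multiplicity. The one thing to confirm is that $v'$ still vanishes to order $e_a-1$ at $\tilde a$ after lifting, which holds because $e_a<p$.
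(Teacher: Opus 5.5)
Your $\subseteq$ argument matches the paper's. It runs: base case via orders of vanishing at the roots of $\bar v$, then reduce mod $p$, peel off $z\,\rad(v_p)^{p^{n-1}}$, divide by $p$ using that $v$ is monic, and induct.

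\textbf{The gap: $\supseteq$ for $n>1$.} You need $v\mid v'g^{p^k}$ modulo $p^{k+1}$ for a lift $g$ of $\rad(v_p)$, and you do not establish it. Your $\subseteq$ step subtracts $z\,\rad(v_p)^{p^{n-1}}$, so it needs the case $k=n-1$ as well. Both inclusions are therefore affected.

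Your first attempt stalls on the extra powers of $v'$, as you note. The fallback rests on a false claim. Over $W_n(\bar{\bold{k}})$ a monic $v$ need not be a unit times $\prod(h-\tilde a)^{e_a}$, because a multiple root of $\bar v$ need not lift to a multiple root in an unramified extension. The bound $e_a<p$ does not help with this.

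For example, take $v=h^2-p$ (allowed for $p\ge 3$). It reduces to $h^2$, yet comparing constant terms mod $p^2$ shows $v\neq u(h-\tilde a)^2$ in $W_2(\bar{\bold{k}})[h]$; its roots $\pm\sqrt p$ lie in a ramified extension. Moreover, $W_n(\bar{\bold{k}})[h]$ is not a domain, so counting orders of vanishing there is not meaningful.

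\textbf{The missing idea.} The key fact is the mod $p$ divisibility $v\mid \rad(v_p)^{p-1}$. It holds because each root of $\bar v$ is a simple root of $\rad(v_p)$ and has multiplicity at most $\deg v\le p-1$. An easy induction on $k$ then finishes, starting from $v'g\in(v,p)$, which is your $n=1$ case. If $v'g^{p^k}\in(v,p^{k+1})$ and $g^{p-1}\in(v,p)$, then in $S[h]$
\[
v'g^{p^{k+1}}=v'g^{p^k}\cdot\bigl(g^{p-1}\bigr)^{p^k}\in(v,p^{k+1})\,(v,p)\subseteq(v,p^{k+2}).
\]
Multiplying by $p^i$ gives $v\mid v'p^i\rad(v_p)^{p^{n-i-1}}$ modulo $p^n$. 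This is exactly the paper's route: it multiplies the level-$(n-1)$ statement by $[\rad(v_p)^{p-1}]^{p^{n-i-2}}$.

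\textbf{A minor point.} Your assertion that the roots of $\bar v$ are distinct modulo $\mathbb{F}_p$-translation is false (e.g.\ $v=h(h+1)$). The conclusion $\rad(v)\mid\rad(v_p)$ that you draw from it is nevertheless true for other reasons.
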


\begin{proof}
Since $v$ divides $v_p\mod p,$  it follows that $v|\rad(v_p)^{p-1} \mod p.$ 
We proceed by induction on $n.$ Let $n=1.$ 
 Let $v(h)=\prod_{i}(h-\alpha_i)^{m_i}, \alpha_i\in \bar{\bold{k}}$. This implies $v_p=\prod_i(h^p-h-(\alpha_i^p-\alpha))^{m_i}$. Hence, $\rad(v)=\prod_{i}(h-\alpha_i)$ and $\rad(v_p)=\prod_i(h^p-h-(\alpha_i^p-\alpha))^{k_i}$ with $k_i\in\{0,1\}$. It follows that $(\rad(v_p))\subset (\rad(v))$, in particular $v$ divides $v'\rad(v_p)$ and
 $v'\rad{v_p}\in v\bold{k}[h]\cap v'\bold{k}[h^p-h].$ Now let $g(h^p-h)\in\bold{k}[h^p-h]$ be so that $v|gv'.$ This implies that each $\alpha_i$ must be a root of
 $g,$  hence $h^p-h-(\alpha_i^p-\alpha_i)|g.$  So $\rad(v_p)|g.$  This concludes the proof of the desired equality $\bold{k}[h]v\cap Z_1^0=v'\rad(v_p)Z_1^0$  (for $n=1).$

Now, assume that the equality holds for any natural number less than $n$. We first verify that the right hand side is a subset of the left one.
Let $r=p^i(\rad(v_p))^{p^{n-i-1}}$ for some $i\in\{0,1,...,n-2\}$; we want to show that $v|v'r$. By induction $v|v'(p^i(\rad(v_p))^{p^{n-i-2}})\mod p^{n-1}$, and since
 $v\mod p$ divides $[(\rad(v_p))^{p-1}]^{p^{n-i-2}}\mod p$, we are done. On the other hand, if $i=n-1$, $v|v'\rad(v_p)\mod p$, which implies $v|v'p^{n-1}\rad(v_p)$.

For the reverse inclusion, we assume $v|v'g(h),$ where $g(h)\in Z_n^0.$  This implies $\rad(v_p)|g(h)\mod p$. Since 
$g(h)\in (\bold{k}[h^p-h])^{p^{n-1 }}\mod p$, we have $(\rad(v_p))^{p^{n-1}}|g(h)\mod p.$ So, $g(h)=z(\rad(v_p))^{p^{n-1}}+pg_1(h)$ where $z\in S_n[(h^p-h)^{p^{n-1}}]$.
It follows that $v|v'g_1(h)\mod p^{n-1}.$ Thus, by the inductive assumption we have $$g_1(h)\in \sum_{i=0}^{n-2} p^{i} (\rad(v_p))^{p^{n-i-2}}S_n[(h^p-h)^{p^{n-i-2}}]\mod p^{n-1}.$$
Hence $$g(h)\in \sum_{i=1}^{n-1} p^{i} (\rad(v_p))^{p^{n-i-1}}S_n[(h^p-h)^{p^{n-i-1}}].$$ This completes the proof.
\end{proof}

\section{Derivations of the center}

The following is the main result of this section.

\begin{theorem}
Let $A$ be either $U(\mathfrak{sl}_2(S))$  or a classical generalized Weyl algebra $A(v)$  for $v\in S[h].$ 
The restriction homomorphism $HH^1_{S_n}(A_n)\to Der_{S_n}(Z_n)$  is an isomorphism for $n=1$ and is not an isomorphism for $n>1.$ 
\end{theorem}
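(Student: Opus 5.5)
For $n=1$ I would use the explicit generators of $HH^1_{\bold{k}}(A_1)$ supplied by Theorem \ref{derivations USL2} (resp. Theorem \ref{derivation of GWA}), together with the explicit description of $Z_1$. For $n>1$ it suffices to exhibit a nonzero class in the kernel of the restriction map.

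\emph{Surjectivity for $n=1$.} For $U(\mathfrak{sl}_2)$, $Z_1$ is generated over $\bold{k}$ by $e^p,f^p,u=h^p-h,\Delta$ with the single relation $e^pf^p+\frac14u^2=\frac14\Delta(\Delta^{\frac{p-1}{2}}-1)^2$. So $Der_{\bold{k}}(Z_1)$ is the module of derivations of a hypersurface ring, namely vector fields on $\bold{k}[e^p,f^p,u,\Delta]$ that preserve the ideal of the relation. This module is generated by the Hamiltonian-type fields attached to pairs of variables, $\partial_aF\,\partial_b-\partial_bF\,\partial_a$, where $F$ is the relation. The singular locus has codimension $\ge 2$, so $Z_1$ is normal and these fields suffice.

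I would then compute how each generator of $HH^1$ restricts. For $z\in Z_1$ one has $d_1(z)(w)=\frac1p[\tilde z,\tilde w]$, which is the Poisson bracket $\{z,w\}$ on $Z_1$. On generators I expect $\{e^p,f^p\}$, $\{e^p,u\}$ and similar to reproduce the Hamiltonian fields up to units, using the $p$-th power identities $\operatorname{ad}(e)^p=\operatorname{ad}(e^p)$ and the lift computation of \cite{SV}. Next, $D_0,\hat D_0$ act on $u$ by $D_0(u)=-D_0(h)=-(H^p-H)$, and $\tilde D_1,\bar D_1$ act on $e^p$ by $\tilde D_1(e^p)=\sum e^kf^{p-1}e^{p-1-k}$. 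Computing these images, together with the action on $\Delta$ deduced from $D(\Delta)=4D(e)f+2HD(h)$, shows that the image contains the generators of $Der(Z_1)$. The GWA case runs the same way with $Z_1=\bold{k}[x^p,y^p,u]/(x^py^p-v_p)$.

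\emph{Injectivity for $n=1$.} Suppose $D$ is a derivation of $A_1$ that kills $Z_1$. I would show it is inner by mimicking the proof of Theorem \ref{derivations USL2}: decompose $D$ into homogeneous pieces and reduce to $D(f)=0$. The generic Azumaya locus argument applies here. On the open set where $A_1$ is Azumaya over $Z_1$, a $Z_1$-linear derivation is inner, so $D$ is locally inner. Because $HH^1$ is described by the explicit generators, it is a torsion-free $Z_1$-module, or at least one can check that the restriction of $\sum z_iD_i+z\,d_1(w)$ vanishes only when the coefficients vanish modulo inner derivations. So $D$ is inner. I expect this step to be the main obstacle: one must carefully verify that the coefficient relations in $HH^1$ match the relations in $Der(Z_1)$, in particular the syzygy among $D_0,\hat D_0$ and the Hamiltonian fields coming from the relation $F$.

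\emph{Failure for $n>1$.} I would take the element $p^{n-1}D$ for a suitable derivation $D$. Alternatively I would use the derivation $d_n(p^{n-1}\Delta)$, i.e. $\frac{1}{p}\operatorname{ad}(\Delta)$ reduced mod $p^n$, which is well defined because $p^{n-1}\Delta$ is central mod $p^n$. Its action on $Z_n$: for $z=\phi_n(\dots)$, $[\Delta,\tilde z]/p$ is divisible by $p^{n-1}$ times a bracket that vanishes mod $p$ on the Witt-lift generators, so the restriction is zero. This class is not inner, because it acts nontrivially on $e$ as $\frac1p[\Delta,e]$, which is $0$. So this candidate fails as stated, and instead I would use $D=p^{n-1}\tilde D_1$. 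This restricts to zero on generators like $p^{n-i-1}(e^p)^{p^i}$, since those already carry factors of $p$, except that on $(e^p)^{p^{n-1}}$ one gets $p^{n-1}p^{n-1}(\dots)=0$. Yet $p^{n-1}\tilde D_1$ is not inner, as one sees by reducing the argument of Theorem \ref{derivations USL2} for degree $p$ derivations. Hence the restriction map is not injective for $n>1$.
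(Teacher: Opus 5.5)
\textbf{Main gap: the case $n>1$.} Your candidate $p^{n-1}\tilde D_1$ is not in the kernel of the restriction map. You only tested the generators of $Z_p(S_n)$, but $Z_n=Z_p(S_n)[\Delta]$. Since $\tilde D_1(f)=\tilde D_1(h)=0$ and $\tilde D_1(e)=f^{p-1}$, you get $p^{n-1}\tilde D_1(\Delta)=4p^{n-1}\tilde D_1(e)f=4p^{n-1}f^{p}\neq 0$ in $A_n$, because $A$ is free over $S$. Adding $d_n$-terms or inner derivations cannot repair this: they all kill $\Delta$, which is already central in $A$. Your bookkeeping on $(e^p)^{p^{n-1}}$ is also off. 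That generator carries the factor $p^0$, not $p^{n-1}$; it is killed only because $\delta(z^p)=pz^{p-1}\delta(z)=0$ for any derivation $\delta$ of $A_1$ and any $z\in Z_1$. The case of $A(v)$ for $n>1$ is not treated at all.

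The paper goes the other way and proves non-surjectivity. For the Weyl algebra over $S_2$, a degree-zero derivation has $D(h)\in S_2[h]$, so $D((h^p-h)^p)=p(h^p-h)^{p-1}(ph^{p-1}-1)D(h)$ and hence $pD((h^p-h)^p)=0$; but $Z_2$ has degree-zero derivations violating this.

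Your non-injectivity idea can be saved with a \emph{degree-zero} class. Take $\delta=(h^p-h)D_0-\Delta(\Delta^{\frac{p-1}{2}}-1)\hat D_0$ on $A_1$, built from the $n=1$ derivations. Then $\delta(\Delta)=0$ and $\delta(h)=-4e^pf^p$. The map $a\mapsto p^{n-1}\delta(a \bmod p)$ is a derivation of $A_n$. It kills $\Delta$, every multiple of $p$, and every $\tilde z^{p^{n-1}}$ with $z\in Z_1$, hence all of $Z_n$. Yet it is not inner: a degree-zero inner derivation is $\operatorname{ad}$ of an element of the commutative ring $A_n^0$, so it kills $h$. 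For $A(v)$, the derivation $D_{n-1}$ of the GWA theorem does the same job.

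\textbf{Second gap: surjectivity for $n=1$.} Your claim that $Der_{\mathbf{k}}(Z_1)$ is generated by the fields $\partial_aF\,\partial_b-\partial_bF\,\partial_a$ is false, and normality does not give it. At the points $e^p=f^p=h^p-h=0$, $\Delta=\Delta_0$ with $\Delta_0^{\frac{p-1}{2}}=1$, the ring $Z_1$ has ordinary double points. Every such field vanishes there and induces on $\mathfrak m/\mathfrak m^2$ an endomorphism of trace $\partial_b\partial_aF-\partial_a\partial_bF=0$; the same holds for all $Z_1$-combinations of them. But the restriction of $D_0$ acts by $e^p\mapsto-2e^p$, $f^p\mapsto0$, $h^p-h\mapsto-(h^p-h)$, $\Delta\mapsto2\Delta(\Delta^{\frac{p-1}{2}}-1)$. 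It therefore induces $\operatorname{diag}(-2,0,-1,-1)$, whose trace is $-4\neq0$. So showing that the image contains the Hamiltonian fields would not prove surjectivity.

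The paper instead argues degree by degree. It uses the relation to force $D(h^p-h)\in(\Delta^{\frac{p-1}{2}}-1,h^p-h)$. It then subtracts restrictions of $D_0$, $\hat D_0$, $d_1(h^p-h)$, $\frac{1}{pm}\operatorname{ad}(f^{mp})$ and $\tilde D_m$.

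\textbf{Injectivity for $n=1$.} Torsion-freeness of $HH^1_{\mathbf{k}}(A_1)$ does not follow from having explicit generators. You need that each graded piece is free over $Z_1^0$ on the listed classes, and that is exactly what remains to be checked.
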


 In fact, we show more: assuming that $A_1=U\mathfrak{sl}_2(\bold{k})$  (respectively $A_1=A(v), v\in\bold{k}[h]$),
then for each $m\in\mathbb{Z}$  divisible by $p$ the degree $m$ components of  $HH^1_{\bold{k}}(A_1)$  and $Der_{\bold{k}}(Z_1)$  are free modules of rank 3 (respectively rank 2)
over $Z_1^0$, and the restriction homomorphism is an isomorphism of $Z_1^0$ -modules.

We start with the case $A_1=U(\mathfrak{sl}_2(\bold{k})).$ 
We first describe the derivations of the center 
$$Z_1=\bold{k}[e^p,f^p,h^p-h,\Delta]/(e^pf^p-\frac{1}{4}(\Delta(\Delta^{\frac{p-1}{2}}-1)^2-(h^p-h)^2)).$$
 Recall that $Z_1^0= \bold{k}[\Delta, h^p-h]$.  
 Suppose that $D$ is a derivation of $Z_1$ of degree 0. We start by showing that there exists a derivation of degree 0 of $A_1$ which restricts to $D.$ 
 Put $D(e^p)=z_1e^p$ and $D(f^p)=z_2f^p$ for some $z_1,z_2\in \bold{k}[\Delta, h^p-h]$.
 Evaluating $D$ on $e^pf^p$ we get
\begin{equation}\label{eq:D(h^p-h)}
    (z_1+z_2)((\Delta^{\frac{p-1}{2}}-1)^2\Delta-(h^p-h)^2)=(1-\Delta^{\frac{p-1}{2}})D(\Delta)-2(h^p-h)D(h^p-h).
\end{equation}
Hence,
\begin{equation*}
    (z_1+z_2)(h^p-h)^2=2(h^p-h)D(h^p-h)\in(\bold{k}[\Delta]/(\Delta^{\frac{p-1}{2}}-1))[h^p-h].
\end{equation*}
Note that $D(h^p-h)\in \bold{k}[\Delta][h^p-h]$, so from the above equality the constant term of $D(h^p-h)$ is in $(\Delta^{\frac{p-1}{2}}-1).$
So, $D(h^p-h)\in(\Delta^{\frac{p-1}{2}}-1,h^p-h)$. Hence, there exist $z,\hat{z}\in \bold{k}[\Delta,h^p-h],$ such that 
$D(h^p-h)-zD_0(h ^p-h)-\hat{z}\hat{D}(h^p-h)=0,$ where the derivations $D_0, \hat{D}_0$ are defined in Theorem \ref{derivations USL2}.

So, there exists a derivation of $A_1$ which agrees with $D$ on  $(h^p-h).$
Thus, without loss of generality we may assume that $D(h^p-h)=0.$ 
Therefore
\begin{equation*}
    (z_1+z_2)((\Delta^{\frac{p-1}{2}}-1)^2\Delta-(h^p-h)^2)=(1-\Delta^{\frac{p-1}{2}})D(\Delta).
\end{equation*}
Then $D(\Delta)=z_0((\Delta^{\frac{p-1}{2}}-1)^2\Delta-(h^p-h)^2)$ for some $z_0\in Z_1^0.$  
Since $4ef=\Delta-(h^p-h)^2$, then
\[
((\Delta^{\frac{p-1}{2}}-1)D_0-(h^p-h)\hat{D}_0)(4ef)=((\Delta^{\frac{p-1}{2}}-1)D_0-(h^p-h)\hat{D}_0)(\Delta-(h^p-h)^2).
\]
We have
\[
((\Delta^{\frac{p-1}{2}}-1)D_0-(h^p-h)\hat{D}_0)(\Delta)=2((\Delta^{\frac{p-1}{2}}-1)^2\Delta-(h^p-h)^2),
\]
and
\[
((\Delta^{\frac{p-1}{2}}-1)D_0-(h^p-h)\hat{D}_0)(h^p-h)=0.
\]
Hence, replacing $D$ by  $D-\frac{z_0}{2}(\Delta^{\frac{p-1}{2}}-1)D_0-\frac{z_0}{2}(h^p-h)\hat{D}_0,$  we  can assume that $D(\Delta)=D(h^p-h)=0$. From the equation \eqref{eq:D(h^p-h)}:
\[
(z_1+z_2)((\Delta^{\frac{p-1}{2}}-1)^2\Delta-(h^p-h)^2)=0,
\]
then $z_1=-z_2$. So, $D(f^p)=z_1f^p$, $D(e^p)=-z_2e^p$ and $D(\Delta)=D(h^p-h)=0$. This is exactly the restriction on the center of the derivation 
$\frac{z_1}{2}\frac{1}{p}[h^p-h,-]$.
This concludes the proof that any degree-zero derivation of $Z_1$  is obtained by a restriction of a degree-zero derivation of $A_1$. 

 Now, suppose $D$ is a degree $pm$ derivation of $Z_1$  with $m>0$ (the negative degree case is similar).   
 Let 
 $$D(e^p)=z_1f^{(m-1)p},\quad D(f^p)=z_2f^{(m+1)p},\quad D(\Delta)=z_3f^{mp},\quad D(h^p-h)=z_4f^{mp},$$ 
 with $z_i\in Z^0(A).$   
Since $\frac{1}{pm}[f^{mp},h^p-h]=-2f^{mp}$, then $D(h^p-h)+\frac{z_4/2}{pm}[f^{mp},h^p-h]=0$.
    Thus, subtracting from $D$ the restriction of $\frac{z_4/2}{pm}[f^{mp}, -],$  we may assume $D(h^p-h)=0$.
    Moreover, since the derivation $\frac{f^{mp}}{p}[h^p-h,-]$ vanishes on $h^p-h$ and $\frac{f^{mp}}{p}[h^p-h,f^p]=2f^{(m+1)p},$ 
we may further assume $D(h^p-h)=D(f^p)=0$.
    Now applying $D$ to equation \eqref{eq:e^pf^p} we get
\begin{equation*}
    4z_1f^{mp}=(1-\Delta^{\frac{p-1}{2}})z_3f^{mp},
\end{equation*}
thus
\begin{equation*}
    4z_1=(1-\Delta^{\frac{p-1}{2}})z_3, \quad D(e^p)\in (1-\Delta^{\frac{p-1}{2}})\bold{k}[\Delta,h^p-h].
\end{equation*}
On the other hand, we have  $\tilde{D}_m(\Delta)=4(1-\Delta^{\frac{p-1}{2}})f^{mp}$, which implies that $\tilde{D}_m(e^p)=(1-\Delta^{\frac{p-1}{2}})f^{(m-1)p}$. Therefore, $D$ can be expressed as a restriction 
of a central multiple of the derivation $\tilde{D}_m$, where the derivation $\tilde{D}_m$ is defined in theorem \ref{derivations USL2}.

Finally, we need to verify that the restriction homomorphism $HH^1_{\bold{k}}(A_1)\to Der_{\bold{k}}(Z_1)$  is injective. To this end, it is a straightforward computation to show that
for each $m\in\mathbb{Z}$  that is a multiple of $p,$  the $Z_1^0$-submodule of $HH^1(A_1)$  of degree-$m$ derivations is a rank 3 free module, and the restriction homomorphism is injective on it. For example, Theorem \ref{derivations USL2} implies that the degree 0 part of $HH^1(A_1)$ is generated over $Z_1^0$  by $d_1(h^p-h), D_0, \hat{D}_0.$
Moreover, it is a straightforward computation to show that their restrictions  in $Der_{\bold{k}}(Z_1)$  are linearly independent over $Z_1^0.$ A similar argument applies for any
degree component of $HH^1(A_1).$ 

Now, we move to the case of classical generalized Weyl algebras $A_1=A(v), v\in\bold{k}[h].$
Let $D\in Der_{\bold{k}}(Z_1)$ be of degree 0. By considering its action on $y^px^p=v_p(h^p-h),$ 
 we get $D(h^p-h)\in(\rad(v_p))$. Note that the restriction of $D_0$ from Theorem \ref{derivation of GWA} is such that $D_0(h^p-h)=-\rad(v_p)$. So, 
 by subtracting from $D$ an appropriate multiple of the restriction of $D_0,$  we may assume $D(h^p-h)=0$. Note also that the derivation $d_1(h^p-h)$ acting on $x^p$ is $x^p$, and vanishes at $h^p-h$. Hence, we can assume further that $D(h^p-h)=D(x^p)=0.$ But this implies $D(y^p)=0$. Thus, $D$ can be expressed as a 
 restriction of a degree-zero derivation of $A_1$. 
 The case when $\deg(D)\neq 0$ is similar to the proof of the corresponding case for $U(\mathfrak{sl}_2(\bold{k})).$ Thus, we have shown that the restriction homomorphism $HH^1(A_1)\to Der(Z_1)$  is surjective. The proof of injectivity is similar to the corresponding part for $U(\mathfrak{sl}_2(\bold{k})).$  Specifically,
 it is straightforward to deduce from Theorem \ref{derivation of GWA} that all graded pieces of $HH^1(A_1)$  have two generators over $Z_1^0,$  and their restrictions are linearly independent
 in $Der(Z_1).$  This concludes the proof that $HH^1(A_1)$  is isomorphic to $Der(Z_1)$  via the restriction map.

Finally, we show that the restriction map is not an isomorphism when $n>1.$ For simplicity, we consider the case $A_2=A(v), v=h\in S_2[h]$ so that $A_2$ is the first Weyl algebra (the other cases are similar).
We are going to exhibit a degree-zero derivation of $Z_2$  which admits no lift to a degree-zero derivation of $A_2$. 


     We start by observing that given a degree-zero derivation  $D\in Der_{\bold{k}}(A_2)$ we have that $h$ commutes with $D(h),$ so 
     $$D((h^p-h)^p)=p(h^p-h)^{p-1}(ph^{p-1}-1)D(h).$$ Then, $p(D(h^p-h)^p)=0.$ 
     Thus, it suffices to construct a derivation $D$ of the center $Z_2$  such that $pD((h^p-h)^p)\neq 0.$ 
     To this end, it is easy to see that $(h^p-h)^p=y^{p^2}x^{p^2}.$ Moreover, we have the following description of the center
    \[
    Z_2=S_2[x^{p^2}, y^{p^2}]\oplus V,\qquad\text{where}\qquad
    V=\sum_{i,j \not\equiv 0 \mod p}pS_2x^{ip}y^{jp}\]
is an $S_2[x^{p^2},y^{p^2}]$-module such that $V^2=0$. Hence, we can pick
any degree-zero $S$-derivation $D:S_2[x^{p^2}, y^{p^2}]\to S_2[x^{p^2},y^{p^2}]$ such that $pD(x^{p^2}y^{p^2})\neq 0$ and extend it to a derivation of $Z_2$  by putting $D(V)=0.$ 

\end{document}